\theoremstyle{plain}
\newtheorem{thm}{Theorem}[section]
\newtheorem{theorem}{Main theorem}
\newtheorem{lem}[thm]{Lemma}
\newtheorem{cor}[thm]{Corollary}
\theoremstyle{definition}
\theoremstyle{definition}
\newtheorem{rem}[thm]{Remark}
\newcommand{\mat}[4]{\left(\begin{array}{cc}#1&#2\\#3&#4\end{array}\right)}
\def \A {\mathbb{A}}
\def \Q{\mathbb{Q}}
\def \C {\mathbb{C}}
\def \R {\mathbb{R}}
\def\H {\mathbb H}
\def\Z{\mathbb Z}
\def \calf {\mathcal{F}}
\def \calh {\mathcal{H}}
\def \calo {\mathcal{O}}
\def \cals {\mathcal{S}}
\def\GL{\text{GL}}
\def\Sh{\mathrm{Sh}}
\def\ram{\mathrm{ram}}
\def\spl{\mathrm{spl}}
\DeclareMathOperator{\bk} {{\it{\mathcal{B}_{X}^{\underline{k}}}}}
\DeclareMathOperator{\bkone} {{\it{\mathcal{B}_{X_{\mathcal{G}}}^{k}}}}
\DeclareMathOperator{\hyp}{\mu_{\mathcal{H}}} 
\DeclareMathOperator{\hypnvol}{\mu_{\mathcal{H}^{\it{r}}}^{vol}} 
\DeclareMathOperator{\hypn}{\mu_{\mathcal{H}^{{\it{r}}}}} 
\DeclareMathOperator{\shyp}{\mu_{shyp}^{vol}}
\DeclareMathOperator{\vx}{\mathrm{vol_{\mathrm{hyp}}}}
\DeclareMathOperator{\uk}{{\it{\underline{k}}}}
\DeclareMathOperator{\uv}{{\it{\underline{v}}}}
\title[Estimates of automorphic forms over quaternion algebras]{Estimates of automorphic cusp 
forms over quaternion algebras}
\author{Anilatmaja Aryasomayajula}
\address{Department of Mathematics, Indian Institute of Science Education and Research Tirupati, 
Transit campus at Sri Rama Engineering College, Karkambadi Road,
Mangalam (B.O),Tirupati-517507, India.}
\email{anil.arya@iisertirupati.ac.in}
\author{Baskar Balasubramanyam}
\address{Department of Mathematics, Indian Institute of Science Education and Research Pune,
Dr. Homi Bhabha Road,
Pashan, Pune 411008, India.
}
\email{baskar@iiserpune.ac.in}
\subjclass[2010]{11F41, 32N05}
\date{\today}
\begin{document}
\begin{abstract}\noindent  
In this article, using methods from geometric analysis and theory of heat kernels, we derive qualitative estimates of automorphic cusp forms defined over quaternion algebras. Using which, we prove an average version of the holomorphic QUE conjecture. We then derive quantitative estimates of classical Hilbert modular cusp forms. This is a generalization of the results from \cite{anil1} and \cite{jk2} to higher dimensions. 

\end{abstract}
\maketitle
The Bergman kernel, which is the reproducing kernel for $L^2$-holomorphic functions defined on a domain in $\mathbb{C}^{n}$ has been extensively studied in complex analysis. The generalization of the Bergman kernel to complex manifolds as the reproducing kernel for the space of global holomorphic sections of a vector bundle carries the information on the algebraic and geometric structures of the underlying manifolds. 

Automorphic forms defined over quaternion algebras are global sections of a holomorphic vector bundle. In this article, we derive asymptotic estimates of the Bergman kernel associated to cuspidal holomorphic vector-valued modular forms of large weight defined over quaternion algebras. We prove an average version of the holomorphic QUE conjecture, when the associated complex analytic space is compact.  We also derive quantitative estimates of the Bergman kernel associated to classical Hilbert modular cusp forms. 

\vspace{0.1cm}\noindent
 In the first half of this article, utilizing certain results from geometric analysis, we derive asymptotic estimates of the Bergman kernel associated to  automorphic cusp forms defined over quaternion algebras. Using which, we prove an average version of the holomorphic QUE conjecture. 

\vspace{0.1cm}\noindent
Jorgenson and Kramer have derived optimal estimates of heat kernels defined over hyperbolic Riemann surfaces of finite volume, and used the estimates to derive optimal 
estimates of the Bergman kernel associated to cusp forms. In the second half of the article, we extend the heat kernel estimates from \cite{jk2} to the setting of Hilbert modular varieties, to derive quantitative estimates of the Bergman kernel associated to classical Hilbert modular cusp forms. Our estimates for the Bergman kernel are optimal, and complement the asymptotic estimates derived in the first half of the article.   

\vspace{0.1cm}\noindent 
Some of the results of this article are known to experts, and the others are very much expected. So this article serves also as a comprehensive collection of estimates for  the Bergman kernel associated to automorphic forms defined over quaternion algebras. 

\bigskip
\section{Introduction}\label{introduction}
In this section, we state and explain our main results, and also similar results from literature. Before introducing our results, we first set up our notation. 
\subsection{Automorphic cusp forms defined over a quaternion algebra}\label{notation1}
Let $F$ be a totally real number field of degree $d$. Let $\calo_F$ denote the ring of integers of $F$. Let $B$ be a quaternion algebra defined over $F$, and let $S$ be the finite set of places of $F$, where $B$ is ramified. Let $\Sigma$ denote the set of infinite places of $F$ and let $\Sigma_\ram$ denote the set of infinite places where $B$ is ramified.  Let $\Sigma_\spl:=\Sigma\backslash\Sigma_\ram$ denote the infinite places where the quaternion algebra $B$ is unramified. We assume  that $r:=|\Sigma_\spl|\not=0$.  

Let $\A$ denote the adele ring over $\Q$, and let $\A_f$ denote the finite part of the adeles. Let $G$ be the restriction of scalars from $F$ to $\Q$ of the algebraic group associated to the units in $B$. Let $Z$ denote the center of $G$. Denote by $G_\infty = G(\R)$, the real points of $G$. Similarly, let $Z_\infty = Z(\R)$. Let $K_\infty$ denote the maximal compact subgroup of $G_\infty$. For any open compact subgroup $U \subset G(\A_f)$, the Shimura variety attached to $B$ of level $U$ is given by 
\begin{align*}
\Sh_U = \Sh^G_U = G(\Q) \backslash G(\A) /U (Z_\infty K_\infty)^\circ,
\end{align*}
where ${}^\circ $ denotes the connected component containing  the identity element.

Let $\mathcal{H}$ denote the hyperbolic upper half-plane. The Shimura variety $\Sh_U$ admits the following decomposition into connected components
\begin{align}\label{shdecomp}
\Sh_U = \bigsqcup_{i=1}^h \Gamma_i \backslash \calh^r,
\end{align}
where $\Gamma_i$ are certain discrete subgroups of $\prod_{v \in \Sigma_{\spl}} \GL_2 (\R)^+$, and $\GL_2(\R)^+$ denotes the matrices with positive determinant.  We take $U$ sufficiently small so that the Shimura variety $\Sh_U$ has no elliptic points. We refer the reader to section \ref{secauto} for more details.

Let $\Gamma$ be any of the $\Gamma_{i}$ from equation \eqref{shdecomp}, and let $X$ denote 
the Shimura variety $\Gamma\backslash \mathcal{H}^{r}$. We identify $X$ locally with its universal cover $\mathcal{H}^{r}$, since $X$ has no elliptic points. Let $\hyp$ denote the hyperbolic metric on $\mathcal{H}$, which for any $z=x+iy\in\mathcal{H}$ is given by the following formula
\begin{align*}
 \hyp(z):=\frac{i}{2}\cdot\frac{dz\wedge d\overline{z}}{y^{2}}=\frac{dx\wedge dy}{y^{2}}.
\end{align*}
Then, for $(z_1,\dots z_r)\in\mathcal{H}^{r}$, the hyperbolic metric on $\mathcal{H}^{r}$ is given by the following formula
\begin{align}\label{defnhyp}
 \hypn(z)=\sum_{i=1}^{r}\hyp(z_i),
\end{align}
which defines a K\"ahler metric on the Shimura variety $X$, compatible with the complex 
structure of $X$. Let $\hypnvol$ denote the volume form associated to the hyperbolic metric 
$\hypn$, and let $\vx(X)$ denote the volume of $X$ with respect to $\hypnvol$. Let $\shyp$ denote the rescaled hyperbolic volume form which gives $X$ volume $1$. 

When $B = M_2 (F)$, the Shimura variety $X=\Gamma\backslash\mathcal{H}^{d}$ is a noncompact complex K\"ahler manifold of finite hyperbolic volume $\vx(X)$ with cusps, and is otherwise a compact K\"ahler manifold. 

Let $\mathcal{S}_{\underline{k},\underline{v}}(\Gamma)$ denote the complex vector space of automorphic cusp forms of weight-$\uk$ with respect to $\Gamma$, where $\uk:=(\uk_\spl,\uk_\ram)$ and $\uk_\spl, \uk_\ram \in \mathbb{Z}_{>0}^{r},  \mathbb{Z}_{>0}^{d-r}$, respectively. The vector $\underline{v}\in \mathbb{Z}_{>0}^{d}$ depends on the vector $\uk$ and a choice of parallel defect, and we refer the reader to section \ref{secauto}  for further details regarding  the definition of  automorphic forms. 
\begin{rem}
We only work with automorphic forms of weight-$\uk=(\uk_\spl,\uk_\ram)$, where $\uk_\spl$ is of the form $(k,\dots,k)\in  \mathbb{Z}_{>0}^{r}$, and for the rest of the article we fix the vector $\uk_\ram \in\mathbb{Z}_{>0}^{d-r}$.  When $X$ is noncompact, $d=r$ and $\uk=(k,\dots,k)\in\mathbb{Z}_{>0}^{d}$, i.e., we have no vector $\uk_\ram$. Even as the weight-$\uk$ varies, we will want it to satisfy a parity condition which is defined in section \ref{secauto}. 
\end{rem}
For $f\in \mathcal{S}_{\underline{k},\underline{v}}(\Gamma)$, the Petersson metric at the point $z=(z_{1}=x_{1}+iy_{1},\dots, z_{r}=x_{r}+iy_{r})\in X$ is given by 
\begin{align}\label{petmetric}
\|f\|_{\mathrm{pet}}^{2}(z):=\bigg(\prod_{i=1}^{r}y^{k}_{i}\bigg)\big(f(z)\cdot {^t}\overline{f(z)}\big).
\end{align}
Let $\delta_{\uk}$ denote the dimension of the complex vector space $\mathcal{S}_{\underline{k},\underline{v}}(\Gamma)$. Let $\lbrace f_{1},\ldots,f_{\delta_{\uk}}\rbrace$ denote an orthonormal basis of $\mathcal{S}_{\underline{k},\underline{v}}(\Gamma)$ with respect to the Petersson inner-product. For $z\in X$, the Bergman kernel associated to the space $\mathcal{S}_{\underline{k},\underline{v}}(\Gamma)$ is given by the following formula 
\begin{align}\label{bcuspdefn}
\bk(z):=\sum_{i=1}^{\delta_{\uk}}\|f_{i}\|_{\mathrm{pet}}^{2}(z).
\end{align}
There exists a finite Galois cover $\pi:X^{1} \longrightarrow X$ of $X$, where $X^{1}:=\Gamma^{1}\backslash\mathcal{H}^{r}$, and we refer the reader to section 
\ref{secauto} for further details regarding the group $\Gamma^1$. Let $[X^1 : X]$ denote the degree of the cover $\pi:X^{1} \longrightarrow X$.

Let $\mathcal{W}_{\uk}$ be the automorphic vector bundle of weight-$\underline{k}=(k,\dots,k,\uk_{\ram})\in \mathbb{Z}_{\ge 2}^{r} \times \mathbb{Z}_{\ge 2}^{d-r}$. The space of holomorphic global sections of the vector bundle $\mathcal{W}_{\uk}$ denoted by $H^{0}\big(X,\mathcal{W}_{\uk}\big)$ is in fact equal to $\mathcal{S}_{\underline{k},\underline{v}}(\Gamma)$. For any $\uk$, let $\mathrm{rank} (\mathcal{W}_{\uk})$ denote the rank  of the vector bundle $\mathcal{W}_{\uk}$. This rank depends only on $\uk_\ram$ and since we have fixed this quantity, the rank remains a constant in all our considerations.

Let $k_{\circ}$ be the smallest positive integer such that the weight-$\underline{k}_{\circ}:=(k_{\circ},\dots,k_{\circ},\uk_{\ram})\in\mathbb{Z}_{\ge 2}^{r}\times\mathbb{Z}_{\ge 2}^{d-r}$ satisfies the parity condition. The weights we are interested are of the form $\uk = (k, \dots, k, \uk_\ram)$ as $k \in k_\circ \Z_{>0}$ goes to infinity. 
With notation as above, let the quaternion algebra $B$ be split at all finite places of $F$, and let the vector $k_{\mathrm{ram}}$ be trivial. For $\uk:=(k_1,\ldots,k_r) \in(2\mathbb{Z}_{ >0})^r$, let  $\cals_{\uk}(\Gamma)$ denote the complex vector space of weight $\uk$- automorphic cusp forms, which are also known as adelic Hilbert modular cusp forms. The automorphic cusp forms associated to the finite subgroup $\Gamma^1$ of $\Gamma$ are the classical Hilbert modular cusp forms.  In the case of classical Hilbert modular cusp forms, we are not confined to parallel weights alone. We assume that $\Gamma^1$ is one of the following two types:

\vspace{0.1cm}
{\it{Type (1)}}: Let $1\leq r<d$, which implies that $\Gamma^1$ is a discrete, irreducible, cocompact subgroup of $\mathrm{PSL}_{2}(\mathbb{R})^{r}$, and the Shimura variety $X^1=\Gamma^{1}\backslash\calh^r$ is a compact complex manifold of dimension $r$, and the hyperbolic metric $\mu_{\mathcal{H}^r}$ defines a K\"ahler metric on $X^{1}$.  

\vspace{0.15cm}
{\it{Type (2)}}: Let the quaternion algebra be split everywhere, i.e., $B=M_{2}(F)$, which implies that $\Gamma^1$ is a finite index subgroup of $\mathrm{PSL}_{2}(\calo_{F})$ without elliptic fixed points. This implies that $X^{1}=\Gamma^{1}\backslash\calh^d$ admits the structure of a noncompact complex manifold of dimension $d$ with cusps. The hyperbolic metric $\mu_{\mathcal{H}^d}$, which is the natural metric on $\calh^d$ defines a K\"ahler metric on $X^1$. 

In both the above cases,  we assume that $\Gamma^1$ does not contain hyperbolic-elliptic elements. This assumption is only to ensure the brevity of of exposition, and the cases of both elliptic elements and hyperbolic-elliptic elements can be easily tackled. 

As above, let $\vx(X^{1})$ denote the volume of $X^{1}$ with respect to the hyperbolic volume form $\mu^{\mathrm{vol}}_{\mathcal{H}^r}(z)$. 
Let $\shyp(z)$ denote the rescaled hyperbolic metric $\mu^{\mathrm{vol}}_{\mathcal{H}^r}(z)\slash \vx(X^1)$, which gives $X^1$ volume $1$.

For $\uk:=(k_1,\ldots,k_r) \in(2\mathbb{Z}_{> 0})^r$, let $\mathcal{S}_{\uk}(\Gamma^1)$ denote the complex vector space of weight-$\uk$ cusp forms with respect to $\Gamma^1$. The point-wise Petersson metric on $\mathcal{S}_{\uk}(\Gamma^1)$ is as in equation \eqref{petmetric}. Let $\delta_{\uk}$ denote the dimension of $\mathcal{S}_{\uk}(\Gamma^1)$, and let $\lbrace f_{1},\ldots,f_{\delta_{\uk}} \rbrace$ denote an orthonormal basis of $\mathcal{S}_{\uk}(\Gamma^1)$ with respect to the Petersson inner-product. For any $z=(x_1+iy_1,\ldots,x_r+iy_r)\in\calh^r$, put 
\begin{align}\label{defnbergmanclassical}
\mathcal{B}_{X^1}^{\uk}(z):=\sum_{i=1}^{\delta_{\uk}}\bigg(\prod_{j=1}^{r}y_j^{k_j}|f_{i}(z)|^{2}\bigg).
\end{align} 
\subsection{Statements of main results}\label{statementofresults}
To motivate our main results, we now recall relevant results associated to Fuchsian subgroups of $\mathrm{PSL}_{2}(\mathbb{R})$. For the rest of this section, let $N\in\mathbb{Z}_{> 0}$ with $N$ square-free.

Let $f$ be any Hecke-normalized newform of $\Gamma_{0}(N)$ with trivial nebentypus and of weight-$2$. 
Then, in \cite{abbes}, Abbes and Ullmo proved the following estimate 
\begin{align}\label{abbesestimate}
\sup_{z\in\mathcal{H}} y|f(z)|=O_{\varepsilon}(N^{\frac{1}{2}+\varepsilon}),
\end{align}
for any $\varepsilon > 0$, and the implied constant depends on $\varepsilon$. 

Using heat kernel techniques, in \cite{jk1}, Jorgenson, and Kramer have re-proved the result of Abbes and Ullmo (i.e., estimate \eqref{abbesestimate}). In \cite{jk2}, Friedman, Jorgenson and Kramer extended their method from \cite{jk1}, to derive sup-norm bounds for $\bkone(z)$. Here,  as in \eqref{bcuspdefn}, $\bkone(z)$ is the Bergman kernel associated to $S_{k}(\mathcal{G})$, the space of weight-$k$ cusp forms associated to $\mathcal{G}$, where $\mathcal{G}$ is a cofinite Fuchsian subgroup of $\mathrm{PSL}_{2}(\mathbb{R})$,  and $k\in2\mathbb{Z}_{>0}$. When $X_{\mathcal{G}}:=\mathcal{G}\backslash\mathcal{H}$ is a compact hyperbolic Riemann surface, they proved the following estimate
\begin{align}\label{jkestimate1}
 \sup_{z\in X_{\mathcal{G}}}\bkone(z)=O(k);
\end{align}
and when $X_{\mathcal{G}}$ is a noncompact hyperbolic Riemann surface of finite volume, they proved the following estimate
\begin{align}\label{jkestimate2}
 \sup_{z\in X_{\mathcal{G}}}\bkone(z)=O(k^{3\slash 2}),
\end{align}
where the implied constants in both the above estimates are independent of $X$. The estimates of Jorgenson and Kramer are optimal, as shown in \cite{jk2}. 

For $k\in\mathbb{R}_{>0}$ with $k>2$, Bergman kernel can be represented by an infinite series, which is uniformly convergent in $z\in X$. Using which, Steiner has extended the bounds of Jorgenson and Kramer to real weights.  Let $\mathcal{G}$ be any subgroup of finite index in $\mathrm{SL}_{2}(\mathbb{Z})$, let $k\in\mathbb{R}_{>0}$ with $k\gg1$, and let $\nu$ denote the factor of automorphy of weight $k$ with the associated character being unitary. Furthermore, let $A$ be a compact subset of $X_{\mathcal{G}}$, and let $\mathcal{B}^{k,\nu}_{X_{\mathcal{G}}}(z)$ denote the Bergman kernel associated to the complex vector space of weight-$k$ cusp forms . Then, in \cite{raphael}, Steiner has derived the following estimates
\begin{align*}
 \sup_{z\in A}\mathcal{B}^{k,\nu}_{X_{\mathcal{G}}}(z)=O_{A}(k),
\end{align*}
where the implied constant depends on the compact subset $A$; and 
\begin{align*}
 \sup_{z\in X_{\mathcal{G}} }\mathcal{B}^{k,\nu}_{X_{\mathcal{G}}}(z)=O_{X}\big(k^{3\slash 2}\big),
\end{align*}
where the implied constant depends on $X$. 

In \cite{anil1}, using similar techniques as in this article, when $X_{\mathcal{G}}$ is a compact hyperbolic Riemann surface, for $z\in X_{\mathcal{G}}$, the first named author has  shown that
\begin{align}\label{estimate1}
\lim_{k\rightarrow \infty}\frac{1}{k}\bkone(z)=\frac{1}{4\pi};
\end{align}
and when $X_{\mathcal{G}}$ is a noncompact hyperbolic Riemann surface of finite volume, the first named author has  shown that
\begin{align}\label{estimate2}
\limsup_{k\rightarrow \infty}\frac{1}{k}\bkone(z)\leq\frac{1}{4\pi}.
\end{align}
Equation \eqref{estimate2} complements estimate \eqref{jkestimate2} in the sense that, 
when $X_{\mathcal{G}}$ is noncompact, for a given $k\in\mathbb{Z}_{>0}$, excepting for finitely many values of $z\in X_{\mathcal{G}}$, the function $\bkone(z)$ satisfies the  estimate $O(k)$. In fact, equations \eqref{estimate1} and \eqref{estimate2} have been extended to half-integral weights and for nontrivial nebentypus in the same article. 

We now state our main results. We have three main results, one is regarding the  asymptotic 
estimates of automorphic cusp forms, and the second one is a proof of the average version of the 
QUE conjecture.  Lastly, the third main result is about the quantitative estimates of classical Hilbert modular cusp forms. 

We now state the first main result, which is proved as Theorem \ref{thm1} and Corollary \ref{cor2} in section \ref{results1}. %
\begin{theorem}\label{mainthm1}
Let the notation be as in section \ref{notation1}. If the Shimura variety $X$ is compact, for $k\in k_{\circ} \mathbb{Z}_{>0}$, $\uk = (k, \dots, k, \uk_\ram)$ and $z\in X$, we have the following equality
\begin{align*}
\lim_{k\rightarrow\infty}\frac{1}{k^{r}}\bk(z)=\frac{[X^1:X] \mathrm{rank}(\mathcal{W}_{\uk})}{(4\pi)^{r}};
\end{align*}
and when the Shimura variety $X$ is noncompact, we have the following inequality
\begin{align*}
\limsup_{k\rightarrow\infty}\frac{1}{k^{d}}\bk(z)\leq\frac{[X^1:X]}{(4\pi)^{d}}.
\end{align*}
\end{theorem}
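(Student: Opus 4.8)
The plan is to reduce the Bergman kernel asymptotics to an on-diagonal heat kernel asymptotic, following the strategy of Jorgenson–Kramer and \cite{anil1} but now in the higher-dimensional Kähler setting $X = \Gamma\backslash\mathcal{H}^r$. The first step is to pass from the automorphic vector bundle $\mathcal{W}_{\uk}$ on $X$ to a line bundle on the Galois cover $X^1 = \Gamma^1\backslash\mathcal{H}^r$. Over $X^1$ the weight-$\uk$ automorphic forms with $\uk = (k,\dots,k,\uk_{\ram})$ are holomorphic sections of $\mathcal{L}^{\otimes k}\otimes\mathcal{F}$, where $\mathcal{L}$ is (roughly) the positive line bundle whose curvature is a multiple of the Kähler form $\hypn$, and $\mathcal{F}$ is a fixed bundle depending only on the frozen datum $\uk_{\ram}$. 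Since $H^0(X,\mathcal{W}_{\uk})$ embeds into $H^0(X^1, \mathcal{L}^{\otimes k}\otimes\mathcal{F})^{\Gamma/\Gamma^1}$, and averaging the Bergman kernel over the finite Galois group produces the factor $[X^1:X]$, it suffices to understand the pointwise Bergman kernel $\bkl$ (or $\bkm$) on $X^1$.

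The second step is the heat-kernel / Bergman-kernel comparison. On a compact Kähler manifold the Bergman kernel $\mathcal{B}_M^{\mathcal{L}^{\otimes k}\otimes\mathcal{F}}(z)$ is the $t\to\infty$ limit of $\sum_n e^{-t\lambda_n}\|s_n(z)\|^2$ over an orthonormal eigenbasis of the Kodaira Laplacian $\Box_k$ on $(0,0)$-forms with values in $\mathcal{L}^{\otimes k}\otimes\mathcal{F}$; equivalently it equals the projection onto $\ker\Box_k = H^0$ of the heat kernel $K_k(t;z,z)$, and by the Bochner–Kodaira vanishing the higher cohomology vanishes for $k\gg 0$. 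One then uses the standard local expansion: after rescaling coordinates near a point $z$ by $\sqrt{k}$, the Laplacian $\Box_k$ converges to a model harmonic-oscillator operator on $\mathbb{C}^r$ whose lowest eigenspace reproduces the Bauer–Fubini–Study–type kernel with leading coefficient $k^r/(2\pi)^r$ times the volume density. Because the curvature of $\mathcal{L}$ equals $\hypn$ normalized so that each $\mathcal{H}$-factor contributes curvature $1/2$, one obtains exactly $k^r/(4\pi)^r$ per the normalization of the Petersson metric in \eqref{petmetric}; the fixed bundle $\mathcal{F}$ contributes its rank, giving $\mathrm{rank}(\mathcal{W}_{\uk})$. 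Multiplying by $[X^1:X]$ from the first step yields the compact-case formula. I would carry this out via the heat-kernel estimates already referenced (the Jorgenson–Kramer machinery extended to $\mathcal{H}^r$), estimating the off-diagonal and large-$t$ contributions so that the limit is uniform in $z\in X$.

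For the noncompact case ($B = M_2(F)$, $X = \Gamma\backslash\mathcal{H}^d$), the same heat-kernel identity holds but the spectral theory now involves continuous spectrum and the cusps, so one only gets an inequality: the heat kernel $K_k(t;z,z)$ dominates the cuspidal projection, hence $\bk(z) = \lim_{t\to\infty}\bigl(\text{cuspidal part of }K_k(t;z,z)\bigr) \le \limsup_{t\to\infty} K_k(t;z,z)$, and the free heat kernel on $\mathcal{H}^d$ (a product of hyperbolic-plane heat kernels) has on-diagonal long-time behaviour controlled by the rescaling argument, giving the bound $k^d/(4\pi)^d$. Since $\uk_{\ram}$ is trivial here, $\mathrm{rank}(\mathcal{W}_{\uk}) = 1$ and $[X^1:X]$ appears as before. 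The crucial point is the upper bound on $K_k(t;z,z)$ uniformly in $z$, which is where optimal hyperbolic heat kernel estimates (following \cite{jk2}) are needed to handle the cuspidal region.

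\textbf{Main obstacle.} The hardest part will be the uniform (in $z$, and especially near the cusps in the noncompact case) control of the heat kernel as $k\to\infty$: establishing that the scaling limit of $\Box_k$ is genuinely the harmonic oscillator with the right error control, and that the large-$t$ tail plus the off-diagonal contributions are negligible, requires careful heat-kernel comparison on $\mathcal{H}^r$ versus $X$ — precisely the geometric-analysis input the paper imports. In the noncompact case one must also rule out that the continuous spectrum and Eisenstein contributions spoil the limit; here one only claims $\limsup \le$, so it suffices to bound, not to compute exactly, which is why the inequality (rather than equality) is the natural statement.
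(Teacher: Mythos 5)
Your strategy for the compact case --- descend to the Galois cover $X^1$ where $\pi^{*}\mathcal{W}_{\uk}$ splits as $\mathcal{F}\otimes\mathcal{L}^{\otimes N}$, then extract the leading term $k^{r}\,\mathrm{rank}(\mathcal{F})\,\mathrm{det}_{\omega}(c_1(\mathcal{L}))$ by rescaling the Kodaira Laplacian to a harmonic-oscillator model --- is essentially the paper's route; the localization step you describe is precisely the content of Bouche's theorem \cite{bouche}, which the paper cites rather than reproves, and the normalization $c_1(\mathcal{L})=\tfrac{k_\circ}{4\pi}\mu_{\calh^r}$ indeed yields the $(4\pi)^{-r}$. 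The genuine gap is in your descent step. The embedding of $H^0(X,\mathcal{W}_{\uk})$ into the $\Gamma/\Gamma^1$-invariants of $H^0(X^1,\mathcal{F}\otimes\mathcal{L}^{\otimes N})$ only yields the one-sided bound $\bk(z)\le [X^1:X]\,\mathcal{B}_{X^1}^{\mathcal{F}\otimes\mathcal{L}^{\otimes N}}(z^1)$ (rescale an orthonormal basis of the invariant subspace, complete it to one of the whole space, and drop the extra terms); ``averaging over the Galois group'' does not by itself give the asserted \emph{equality}, since you would need to know that the invariant subspace, evaluated along the orbit $\{\gamma z^1\}_{\gamma\in E}$, captures the full Bergman kernel of $X^1$ to leading order. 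This is exactly the nontrivial input the paper takes from Ma--Marinescu (\cite{ma}, Theorem 2): for $N\gg 0$ one has $\mathcal{B}_{X}^{\mathcal{W}_{\uk}}(z)=\sum_{\gamma\in E}\mathcal{B}_{X^{1}}^{\mathcal{F}\otimes\mathcal{L}^{\otimes N}}(\gamma z^{1})$, and it is this identity, with $|E|=[X^1:X]$ terms each tending to the same limit, that produces the factor $[X^1:X]$ in the limit. Without it your argument proves only $\limsup\le$, not the claimed equality.

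For the noncompact case you take a genuinely different route: dominating the cuspidal Bergman kernel by the weight-$\uk$ heat kernel on the quotient and estimating the $\Gamma^1$-sum, in the spirit of \cite{jk2} and \cite{anil1}. The paper instead combines the basis-extension inequality above with Berman's local holomorphic Morse inequalities \cite{berman}, which give $\limsup_{k} k^{-d}\mathcal{B}_{X^{1}}^{\mathcal{L}^{\otimes N}}(z^1)\le \mathrm{det}_{\omega}(c_1(\mathcal{L})(z^1))=(4\pi)^{-d}$ pointwise with no group sum to control. Your approach is viable for fixed $z$ --- the identity term contributes $\prod_j (k_j-1)/(4\pi)$ and the remaining terms decay like $\cosh^{-2k}(\rho_{\gamma,z}/2)$ with $\rho_{\gamma,z}$ bounded below once $z$ is fixed away from the cusp --- but you would still need the counting-function estimates to show the tail is $o(k^{d})$, and the parabolic contributions are exactly what inflate the uniform bound to $k^{3d/2}$ in the sup-norm theorem, so the pointwise nature of the claim is essential to your argument. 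Berman's inequality sidesteps all of this, which is what the paper's shorter proof buys.
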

Asymptotic estimates of Bergman kernels associated to holomorphic line bundles on compact complex manifolds has been an object of extensive study over the years, and Tian, Demailly,  Zelditch, et al., have made stellar contributions to this theory.  We have proved the above theorem by specializing the results of Bouche and Berman to our setting, which are an extension of the foundational work of Demailly and Zelditch.
\begin{rem}
The reason why we only work with parallel weights corresponding to the unramified part of the 
weight is the following. Let us consider the situation of nonparallel weights, i.e., let  $\underline{k}_{\spl}=(k,\dots,k, k_{1},\dots,k_{i})$, for some fixed vector $(k_{1},\dots k_{i})\in\mathbb{Z}_{>0}^{i}$ and $i>0$. Let $\bk(z)$ be the Bergman kernel associated to vector the space of automorphic cusp forms $\mathcal{S}_{\uk,\underline{v}}(\Gamma)$. Then, from the proof of Main Theorem \ref{mainthm1}, for any $z\in X$, we can easily show that
\begin{align*}
 \lim_{k\rightarrow\infty}\frac{1}{k^{r}}\bk(z)=0.
\end{align*}\end{rem}

We now state the second main result of the article, which is an extension of the the estimates  from \cite{jk2} to classical Hilbert modular cusp forms, and is proved as Theorems \ref{proofmainthm2}  and \ref{proofmainthm2.2} in section \ref{results2}. 
\begin{theorem}\label{mainthm2}
Let notation be as in section \ref{notation1}, and let the quaternion algebra be split at all finite places of $F$. For any $\uk=(k_1,\ldots,k_r)\in (2\mathbb{Z}_{>0})^r$ and $z\in X^{1}$, when $\Gamma^1$ is of {\it{Type (1)}}, we have the following estimate
\begin{align}\label{thm2estimate1}
 \sup_{z\in X^1}\mathcal{B}_{X^{1}}^{\uk}(z)=O_{X^1}\left(\prod_{j=1}^{r}k_j\right);
\end{align}
and when $\Gamma^1$ is {\it{Type (2)}}, we have following estimate
\begin{align}\label{thm2estimate2}
 \sup_{z\in X^1}\mathcal{B}_{X^1}^{\uk}(z)=O_{X^1}\Bigg(\prod_{j=1}^{d}k_j^{3\slash 2}\Bigg),
\end{align}
where the implied constants in both the above estimates are dependent on $X^1$. 
\end{theorem}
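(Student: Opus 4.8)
The plan is to adapt the heat kernel method of Jorgenson--Kramer (as developed in \cite{jk1}, \cite{jk2}) from the one-dimensional setting of Fuchsian groups to the product situation $\mathcal{H}^r$, exploiting the fact that the hyperbolic Laplacian on $\mathcal{H}^r$ is a sum of Laplacians on the factors, so that the heat kernel on $\mathcal{H}^r$ factors as a product of the hyperbolic heat kernels on the individual copies of $\mathcal{H}$. First I would set up the standard comparison: for $z \in X^1$, the quantity $\mathcal{B}_{X^1}^{\uk}(z) = \sum_i \prod_j y_j^{k_j}|f_i(z)|^2$ is the restriction to the diagonal of the Bergman kernel of $\mathcal{S}_{\uk}(\Gamma^1)$, which can be bounded above by the value at $(z,z)$ of an automorphic kernel built from the Poincar\'e series attached to the point-pair invariant $\prod_j k_j^{-1}(z_j - \bar w_j)^{-k_j}$ type kernel; equivalently, via the spectral expansion, $\mathcal{B}_{X^1}^{\uk}(z)$ is controlled by $\prod_j (k_j - 1)$ times the heat-kernel-regularized counting function on $\Gamma^1 \backslash \mathcal{H}^r$. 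Concretely, one writes the weight-$\uk$ automorphic Green's/heat kernel on $X^1$ as an average over $\Gamma^1$ of the product over $j$ of the weight-$k_j$ hyperbolic kernel on $\mathcal{H}$, and one estimates it by splitting $\Gamma^1$ into the identity contribution, the hyperbolic (and in Type (2), parabolic) contributions.

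Next I would carry out the estimation factor-by-factor. For each copy of $\mathcal{H}$ the Jorgenson--Kramer estimates give: the identity contribution to the weight-$k_j$ Bergman kernel is $O(k_j)$, the sum over nontrivial hyperbolic elements converges and contributes a lower-order term bounded uniformly, and in the noncompact case the parabolic/cuspidal contribution near a cusp is what produces the extra $k_j^{1/2}$, giving $O(k_j^{3/2})$ near cusps and $O(k_j)$ on compact subsets. Because the kernel on $\mathcal{H}^r$ is a product and the group $\Gamma^1$ in Type (1) is cocompact and in Type (2) sits inside $\mathrm{PSL}_2(\calo_F)$ acting diagonally, the total kernel on $X^1$ is estimated by multiplying these one-variable bounds: in Type (1), since $X^1$ is compact one gets the uniform bound $O_{X^1}(\prod_{j=1}^r k_j)$, and in Type (2) the worst behaviour occurs in the cuspidal neighbourhoods where every factor contributes its $k_j^{3/2}$, yielding $O_{X^1}(\prod_{j=1}^d k_j^{3/2})$. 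The assumption that $\Gamma^1$ contains no hyperbolic-elliptic (and no elliptic) elements is exactly what keeps the off-diagonal group sum a clean product of one-variable hyperbolic sums, each handled as in \cite{jk2}.

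The main obstacle, I expect, is the analysis near the cusps in Type (2): one must control the sum over parabolic elements of the product of weight-$k_j$ hyperbolic kernels, uniformly as $y_j \to \infty$ simultaneously in several variables, and show the bound is genuinely $\prod k_j^{3/2}$ and not worse. This requires the precise short-time/large-weight asymptotics of the hyperbolic heat kernel on $\mathcal{H}$ together with the scaling behaviour of the cusp-neighbourhood coordinates, and care that the implied constant depends only on the geometry of $X^1$ (the number of cusps, the cusp widths, the systole), not on the individual $k_j$. A secondary technical point is justifying the interchange of the spectral/heat-kernel regularization with the passage to weight-$\uk$ forms in the product setting, i.e.\ that the weight-$\uk$ automorphic kernel on $\mathcal{H}^r$ really does dominate $\mathcal{B}_{X^1}^{\uk}$ on the diagonal; this is routine in each variable and tensorizes, but should be stated carefully. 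Once these cusp estimates are in place, Type (1) is essentially the compact case where the cuspidal term is absent, so the bound $O_{X^1}(\prod k_j)$ follows immediately by the same product argument.
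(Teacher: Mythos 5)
Your plan is the paper's proof: bound $\mathcal{B}_{X^1}^{\uk}(z)$ by the weight-$\uk$ heat kernel on the diagonal, which factors as $\prod_j K_{\calh}^{k_j}(t;z_j,\gamma_j z_j)$, then estimate the group sum via the pointwise heat kernel bounds of \cite{jk2} together with a Jorgenson--Lundelius counting-function inequality controlled by the injectivity radius (Type (1)), and treat the cusp contribution separately to produce the extra $\prod_j k_j^{1/2}$ (Type (2)).

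One concrete point in your sketch would fail as stated, however. You propose to split $\Gamma^1$ into ``identity $+$ hyperbolic $+$ parabolic'' and to absorb all hyperbolic elements into a uniformly convergent lower-order term, isolating only the \emph{parabolic} elements at the cusp. In the Hilbert modular setting the stabilizer $\Gamma^1_\infty$ of a cusp is not the translation lattice alone: it contains the elements $\smallmat{\varepsilon}{\alpha}{0}{\varepsilon^{-1}}$ with $\varepsilon\in\calo_F^\times$, which are hyperbolic at each real place. Since $\sigma_j(\calo_F^\times)$ is dense in $\R^\times$ for $d\ge 2$, the single-place displacement $\rho_{\gamma_j,z_j}$ for such elements can be arbitrarily small, so no per-factor injectivity-radius bound applies to them and they cannot be fed into the one-variable counting argument. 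The correct decomposition (the one the paper uses) is $\Gamma^1 = (\Gamma^1\setminus\Gamma^1_\infty)\cup\Gamma^1_\infty$, with the injectivity radius in \eqref{injecttype2} defined by excluding the \emph{entire} cusp stabilizer; the stabilizer sum is then handled as a double sum over $\alpha\in\calo_F$ and $\varepsilon\in\calo_F^\times$, where the $\alpha$-sum is compared to an integral producing the ratio $\Gamma(k_j-1/2)/\Gamma(k_j)$ (Lemma \ref{auxlemma}) and the $\varepsilon$-sum is controlled only by using the product formula $\prod_j\varepsilon_j=\pm 1$ across all $d$ places simultaneously, as in \eqref{proofmainthm2.2eqn5}. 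This genuinely multi-variable step, together with the bound $y_j\,\Gamma(k_j-1/2)/\Gamma(k_j)=O(\sqrt{k_j})$ in the cuspidal region, is what yields $\prod_j k_j^{3/2}$; it is not obtained by tensorizing the one-variable parabolic computation of \cite{jk2}. Once you repair the decomposition accordingly, the rest of your argument goes through exactly as in the paper.
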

\begin{rem}
However the implied constants in both the above estimates remain stable in covers of Hilbert modular varieties. Furthermore, when $\Gamma^1$ is of {\it{Type (2)}}, the implied constant in estimate \eqref{thm2estimate2} is bounded by a constant which depends only on $\mathrm{PSL}_{2}(\mathcal{O}_{F})$. Hence, the implied constant in  estimate \eqref{thm2estimate2} is a universal constant. 
\end{rem}

We now state the third main result of this article, which is proved as Corollary \ref{cor1} in section \ref{results1}. 
\begin{theorem}
Let notation be as in section \ref{notation1}. With notation as above, let the Shimura variety $X$ be compact. Then, for $k\in k_{\circ}\mathbb{Z}_{>0}$, $\uk = (k, \dots, k, \uk_\ram)$ and $z\in X$, 
we have 
\begin{align}\label{que}
\lim_{k\rightarrow \infty}\frac{1}{\delta_{\uk}}\bk(z)\hypnvol(z)=\shyp(z), 
\end{align}
and the convergence of the limit is uniform in $z\in X$.
\end{theorem}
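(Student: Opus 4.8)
The plan is to deduce the equidistribution statement \eqref{que} directly from Main Theorem \ref{mainthm1} together with the asymptotic behaviour of the dimension $\delta_{\uk}$. First I would recall that since $X$ is compact, $\mathcal{S}_{\underline{k},\underline{v}}(\Gamma) = H^0(X, \mathcal{W}_{\uk})$, and $\mathcal{W}_{\uk}$ is, up to the fixed twist coming from $\uk_\ram$, the $k$-th power of a positive line bundle on the $r$-dimensional Kähler manifold $X$ (the line bundle whose curvature is a multiple of $\mu_{\mathcal{H}^r}$). By Riemann--Roch / Hirzebruch together with Kodaira vanishing for $k \gg 0$, one gets an asymptotic expansion $\delta_{\uk} = c\, k^r + O(k^{r-1})$, and the leading coefficient $c$ is exactly the integral over $X$ of the top power of the curvature form times $\mathrm{rank}(\mathcal{W}_{\uk})$. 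Concretely, matching normalizations with the Bergman kernel computation, one finds
\begin{align*}
\lim_{k\to\infty}\frac{\delta_{\uk}}{k^r} = \frac{[X^1:X]\,\mathrm{rank}(\mathcal{W}_{\uk})}{(4\pi)^r}\cdot \frac{\vx(X)}{1} ,
\end{align*}
i.e. the same constant as in Main Theorem \ref{mainthm1} but integrated against $\hypnvol$ rather than evaluated pointwise. (Indeed $\delta_{\uk} = \int_X \bk(z)\,\hypnvol(z)$ since the $f_i$ are an orthonormal basis, so the dimension asymptotic is literally the integrated version of Theorem \ref{mainthm1}, which removes any separate computation.)

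Given this, the argument is short. For each $z \in X$ write
\begin{align*}
\frac{1}{\delta_{\uk}}\bk(z)\,\hypnvol(z) = \frac{k^r}{\delta_{\uk}}\cdot\frac{\bk(z)}{k^r}\,\hypnvol(z).
\end{align*}
By Main Theorem \ref{mainthm1}, $\bk(z)/k^r \to [X^1:X]\,\mathrm{rank}(\mathcal{W}_{\uk})/(4\pi)^r$ pointwise, and by the dimension asymptotic $k^r/\delta_{\uk} \to (4\pi)^r/\big([X^1:X]\,\mathrm{rank}(\mathcal{W}_{\uk})\,\vx(X)\big)$. Multiplying, the constants cancel and the limit is $\hypnvol(z)/\vx(X) = \shyp(z)$, which is the claimed identity.

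For the uniformity of convergence I would invoke the fact that the asymptotic expansion of the Bergman kernel of a high power of a positive line bundle on a compact manifold (Tian--Zelditch--Catlin, in the form used by Bouche and Berman to prove Theorem \ref{mainthm1}) holds in the $C^0$-topology on $X$: one has $\bk(z) = \frac{[X^1:X]\mathrm{rank}(\mathcal{W}_{\uk})}{(4\pi)^r}k^r + O(k^{r-1})$ with the error uniform in $z$ because $X$ is compact. Dividing by $\delta_{\uk} \sim c k^r$, which does not depend on $z$, preserves uniformity, so $\tfrac{1}{\delta_{\uk}}\bk(z)\hypnvol(z) - \shyp(z) \to 0$ uniformly on $X$. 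The main obstacle is purely bookkeeping: one must make sure the normalization of the hyperbolic metric $\mu_{\mathcal{H}^r}$, the factor $\prod y_i^k$ in the Petersson metric \eqref{petmetric}, and the curvature of $\mathcal{W}_{\uk}$ are matched so that the two constants genuinely cancel; but since the dimension $\delta_{\uk}$ is by definition $\int_X \bk \,\hypnvol$, this cancellation is automatic and requires no independent evaluation of either constant. The only real content beyond Theorem \ref{mainthm1} is the uniformity, which comes for free from the compact-manifold Bergman expansion already used there.
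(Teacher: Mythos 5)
Your proposal is correct and follows essentially the same route as the paper: the paper's proof also writes $\delta_{\uk}=\int_X\bk(z)\,\hypnvol(z)$, integrates the uniform pointwise asymptotic $\bk(z)=C k^r+o(k^r)$ from Theorem \ref{thm1} to get $\delta_{\uk}=C\,\vx(X)\,k^r+o(k^r)$, and lets the constant $C$ cancel in the quotient. Your aside about Riemann--Roch is, as you yourself note, superfluous; the integrated Bergman asymptotic already supplies the dimension count.
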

This proves an average version of the holomorphic QUE conjecture for automorphic cusp forms associated to quaternion algebras. 
\begin{rem}
Adapting results from \cite{dai} (Theorem 1.1), it is easy to show that
\begin{align}\label{rateofconv}
\frac{1}{\delta_{\uk}}\bk(z)= \shyp(z)+O_{X}(k^{-r}),
\end{align}
where the implied constant depends on the Shimura variety $X$. A careful analysis of the adaption of Theorem 1.1 in \cite{dai} to our setting should enable one to prove that the implied constant in equation \eqref{rateofconv} is independent of the Shimura variety $X$.  
\end{rem}
In \cite{liu}, Liu has proved the above equidistribution result, for Hilbert modular cusp forms of even weight associated to the full modular group $\mathrm{SL}_{2}(\mathcal{O}_{F})$. Let $X:=\mathrm{SL}_{2}(\mathcal{O}_{F})\backslash\calh^d$ be the associated Hilbert modular variety. Then, with notation as above, for any compact subset $A\subset X$, Liu has proved that
\begin{align}
\int_{A}\bk(z)\mu_{\mathcal{H}^{d}}^{\mathrm{vol}}(z)=\int_{A}\shyp(z)+O_{\varepsilon,A}(k^{(-1+\varepsilon)d}),
\end{align}
for any $0<\varepsilon<1$. Liu used an infinite series representation for the Bergman kernel associated to the space of cusp forms. Using Liu's technique, in \cite{codgell}, Codgell and Luo have extended the above equidistribution result to Siegel modular cusp forms associated to $\mathrm{Sp}_{2n}(\mathbb{Z})$. 

In \cite{anil1}, the first named author has proved equation \eqref{que} for cusp forms associated to a cocompact Fuchsian subgroup $\mathcal{G}$ of $\mathrm{PSL}_{2}(\mathbb{R})$.  

In \cite{linden}, in a seminal work, Lindenstrauss has proved the AQUE conjecture for M\"ass forms associated to division quaternion algebras. In \cite{sound}, Holowinsky and Soundararajan have proved the quantum unique ergodicity conjecture for Hecke eigenforms of even weight with respect to $\mathrm{SL}_{2}(\mathbb{Z})$. In \cite{marshall}, Marshall has extended methods of Holowinsky and Soundararajan to prove QUE for Hecke eigenforms on $\mathrm{GL}_2$ over a totally real number field, and to automorphic forms of cohomological type on $\mathrm{GL}_2$ over an arbitrary number field, which satisfy certain Ramanujan bounds. The conjecture is still open, when the associated analytic space is compact.  
\begin{rem}
Our methods are very analytic and also some what geometric, and easily extend to automorphic cusp forms associated to irreducible lattices. However, we restrict ourselves to arithmetic groups for the applications of our results to number theory.  
\end{rem}

\vspace{0.2cm}\noindent
{\bf{Organization of the article}}. 
In the second section, we introduce technical results from literature, which we later use in the fourth section. In section \ref{subsecbergman}, we introduce Bergman kernels associated to vector bundles defined over complex manifolds, and state the main results regarding the estimates of Bergman kernels from \cite{bouche} and \cite{berman}, which are used in section \ref{results1}. In section \ref{subsechkestimates}, we introduce weight-$k$ heat kernels associated to hyperbolic Riemann surfaces, and state the estimates of weight-$k$ heat kernels from \cite{jk2}. We use these estimates in section \ref{results2}

In the third section, we recall the basics of automorphic forms defined over quaternion algebras in detail.  In the fourth section, using results from section two, we derive both qualitative and quantitative estimates of the Bergman kernel associated to automorphic forms.  In section \ref{results1}, we derive qualitative estimates of the Bergman kernel associated to automorphic cusp  forms defined over quaternion algebras, and prove an average version of the holomorphic QUE conjecture. In section \ref{results2}, we derive quantitative estimates of the Bergman kernel associated to classical Hilbert modular cusp forms. 
\bigskip
\section{Bergman kernel on complex manifolds and Galois covers and heat kernel estimates}\label{sectionbergman}
In this section, we recall the main results from \cite{bouche}, \cite{berman}, \cite{ma}, and \cite{jk1} 
which we use in section \ref{results1}. 
\subsection{Bergman kernels on complex manifolds}\label{subsecbergman}
Let $M$, $\widetilde{M}$ be  compact manifolds, with $\pi:\widetilde{M}\longrightarrow M$ being a finite Galois cover of $M$. Let 
$E$ denote the group of deck transformations for the the cover $\pi:\widetilde{M}\longrightarrow M$. Let $\mathcal{W}$ be a Hermitian vector bundle with a Hermitian metric $\|\cdot\|_{\mathcal{W}}$ on $M$. Let  $\lbrace s_{i}\rbrace$ denote an orthonormal basis for the space of holomorphic global sections $H^{0}(M,\mathcal{W})$. For $z\in M$, the following function is called the Bergman kernel associated to the vector bundle $\mathcal{W}$
\begin{align}\label{bkdefn}
\mathcal{B}_M^{\mathcal{W}}(z):= \sum_{i}\| s_{i}\|_{\mathcal{W}}^{2} (z).
\end{align}
The above definition is independent of the choice of orthonormal basis. There is a similar definition of Bergman kernel for any holomorphic vector bundle on $\widetilde M$, in particular, for $\pi^* \mathcal{W}$. It is a natural question to ask if there is a relationship between the Bergman kernels $\mathcal{B}_M^{\mathcal{W}}$ and $\mathcal{B}_{\widetilde{M}}^{\pi^* \mathcal{W}}$.

We can say something when $\mathcal{W}$ is of the following special form. Let $\mathcal{G}$ and $\mathcal{J}$ be a Hermitian vector bundle and a Hermitian line bundle, respectively over $M$. For $k\in\mathbb{Z}_{>0}$, let $\mathcal W_k := \mathcal G \otimes \mathcal L^{\otimes k}$.  We have the following asymptotic  relation between the Bergman kernels (see \cite[Th.\,2]{ma}),
\begin{align}\label{maeqn1}
\mathcal{B}_{M}^{\mathcal W_k}(\pi(z)) =\sum_{\gamma\in E}\mathcal{B}_{\widetilde{M}}^{\pi^* \mathcal{W}_k}(\gamma z),
\end{align}
for $k \gg 0$. However, we require a slightly modified version of the above identity. Let $\lbrace \mathcal{W}_{k}\rbrace_{k\in\mathbb{Z}_{>0}}$ be a family of vector bundles defined over $M$, such that for each $k\in\mathbb{Z}_{>0}$, we have  $\pi^{\ast}\mathcal{W}_{k}= \mathcal{F} \otimes \mathcal{L}^{\otimes k}$ on $\widetilde{M}$. In other words, it is sufficient to assume that the vector bundle decomposes into a tensor product of above form only after pullback to $\widetilde M$.  Let $z\in M$, and let $\tilde{z}\in\pi^{-1}(z)$. Then, from the proof of Theorem 2 in \cite{ma} on p.1336, where equation \eqref{maeqn1} is proved, it is clear that for $k\in\mathbb{Z}_{>0}$ with $k\gg 0$, we have 
\begin{align}\label{maeqn}
\mathcal{B}_{M}^{\mathcal{W}_{k}} (z)=\sum_{\gamma \in E} \mathcal{B}_{\tilde{M}}^{\mathcal{F} \otimes \mathcal{L}^{\otimes k}}(\gamma \tilde{z}).
\end{align}
We now recall some estimates on the growth of the Bergman kernel on $\widetilde M$. Let the Hermitian metric on $\mathcal L$ be of the form
$\|s(z)\|^{2}_{\mathcal{L}}:=e^{-\phi(z)}|s(z)|^{2}$, where $s\in\mathcal{L}$ is any section, and $\phi(z)$ is 
a real-valued function defined on $\widetilde M$. Let 
\begin{align}\label{curvatureform}
c_{1}(\mathcal{L})(z):=\frac{i}{2\pi}\partial\overline{\partial}\phi(z)
\end{align}
denote the curvature form of the line bundle $\mathcal{L}$ at the point $z\in \widetilde M$. Let $\alpha_{1},\ldots,\alpha_{n}$ 
denote the eigenvalues of $\partial\overline{\partial}\phi(z)$ at the point $z\in \widetilde M$. Then, in  Theorem 2.1 in \cite{bouche}, for $z\in \widetilde M$, Bouche derived the following equality 
\begin{align}\label{boucheeqn2}
\lim_{k\rightarrow\infty}\frac{1}{k^{n}} \mathcal{B}_{\widetilde M}^{\mathcal F \otimes \mathcal{L}^{\otimes k}} (z)= \mathrm{rank}(\mathcal{F})\cdot\mathrm{det}_{\omega}\big(c_{1}(\mathcal{L})(z)\big),
\end{align}
and the convergence of the above limit is uniform for $z\in \widetilde M$. 

When $\widetilde M$ is a noncompact complex manifold, using micro-local analysis of the Bergman kernel, in \cite{berman}, for $z\in \widetilde M$, Berman derived the following inequality
\begin{align}\label{bermaneqn}
\limsup_{k\rightarrow\infty}\frac{1}{k^{n}}\mathcal{B}_{\widetilde M}^{\mathcal{L}^{\otimes k}}(z)\leq \mathrm{det}_{\omega}\big(c_{1}(\mathcal{L})(z)\big).
\end{align}
\subsection{Estimates of heat kernels on Hilbert modular varieties}\label{subsechkestimates}
Let the quaternion algebra $B$ be split at all finite places of $F$, which implies that $\Gamma^1$ is a discrete and irreducible arithmetic subgroup of $\mathrm{PSL}_{2}(\mathbb{R})^{r}$, as in section \ref{notation1}. Recall that the quotient space $X^1=\Gamma^1\backslash\calh^{r}$ is a complex manifold of finite hyperbolic volume. For any $\uk:=(k_1,\ldots,k_r)\in (2\mathbb{Z}_{>0})^r$ and $z=(z_1,\ldots,z_r)\in\calh^r$, let
\begin{align*}
\Delta_{\mathrm{hyp}}^{\uk}:=\sum_{j=1}^{r}\Delta_{\mathrm{hyp},j}^{k_j},\,\,\mathrm{where}\,\,\Delta_{\mathrm{hyp},j}^{k_j}:=-y_j^{2}\Bigg(\frac{\partial^{2}}{\partial x_{j}^{2}}+\frac{\partial^{2}}{\partial y_{j}^{2}}\Bigg)+2ik_jy_j\frac{\partial }{\partial x_j},
\end{align*}
denote the hyperbolic Laplacian of weight-$\uk$ on $\calh^r$.

For any $k_j> 0$ and $z_j, w_j\in\calh$, let $K_{\calh}^{k_j}(t;z_{j},w_j)$ denote the hyperbolic heat kernel associated to $\Delta_{\mathrm{hyp}}^{k_j}$, the hyperbolic Laplacian of weight-$k_j$ on $\calh$. Then, for any  $\uk=(k_1,\ldots,k_r)\in (2\mathbb{Z}_{>0})^r$, $t\in\mathbb{R}_{>0}$, and $z=(z_1,\ldots,z_r), w=(w_1,\ldots,w_r)\in\calh^{r}$, put
\begin{align*}
K_{\calh^r}^{\uk}(t;z,w):=\prod_{j=1}^{r}K_{\calh}^{k_j}(t;z_{j},w_j),
\end{align*}
which is the hyperbolic heat kernel of weight-$\uk$ on $\calh^r$. 

Now, for any  $\uk=(k_1,\ldots,k_r)\in (2\mathbb{Z}_{>0})^r$, $t\in\mathbb{R}_{>0}$, and $z=(z_1,\ldots,z_r), w=(w_1,\ldots,w_r)\in X^1$ the hyperbolic heat kernel of weight-$\uk$ associated to $X^1$ is given by the following formula
\begin{align*}
K_{X^1}^{\uk}(t;z,w):=\sum_{\gamma=(\gamma_1,\ldots,\gamma_r)\in\Gamma^1}\,\prod_{j=1}^{r}\Bigg(\frac{c_j\overline{w}_j+d_j}{c_jw_j+d_j}\Bigg)^{k_j}\Bigg(\frac{z_j-\gamma_j \overline{w}_j}{\gamma_j w_j-\overline{z}_j}\Bigg)^{k_j}K_{\calh}^{k_j}(t;z_{j},w_j).
\end{align*}
 For any $\uk=(k_1,\ldots,k_r)\in (2\mathbb{Z}_{>0})^r$, let $\mathcal{S}_{\uk}(\Gamma^1)$ denote the complex vector space of weight-$\uk$ cusp forms. Recall that from section \ref{notation1}, for  any $z=(z_1,\ldots,z_r) \in X^{1}$, the Bergman kernel associated to the vector space $\mathcal{S}_{\uk}(\Gamma^1)$ is denoted by $\mathcal{B}_{X^1}^{\uk}(z)$, and is given by equation \eqref{defnbergmanclassical}

For any $\uk=(k_1,\ldots,k_r)\in (2\mathbb{Z}_{>0})^r$, following the same arguments as in \cite{jk2}, we arrive at 
\begin{align*}
\mathcal{S}_{\uk}(\Gamma^1)\subseteq \mathrm{Ker}\bigg(\Delta_{\mathrm{hyp}}^{\uk}-\sum_{j=1}^{r}k_j(1-k_j)\bigg).
\end{align*}
 
Furthermore, for any $t\in\mathbb{R}_{>0}$,  we have the following inequality
\begin{align}\label{hkeqn0}
\mathcal{B}_{X^1}^{\uk}(z)\leq \lim_{t\rightarrow \infty} e^{-\sum_{j=1}^{r}k_j(k_j-1)t}\cdot K_{X^1}^{\uk}(t;z,z)\leq  K_{X^1}^{\uk}(t;z,z)\notag\leq \\
\sum_{\gamma=(\gamma_1,\ldots,\gamma_r)\in\Gamma^1}\,\prod_{j=1}^{r}K_{\calh}^{k_j}(t;z_j,\gamma_jz_j) .
\end{align}
 Now following the same arguments as in \cite{jk2}, namely, inequalities (11) and (13), for any  $\uk=(k_1,\ldots,k_r)\in (2\mathbb{Z}_{>0})^r$, $\gamma=(\gamma_1,\ldots,\gamma_r)\in\Gamma^1$, and $z=(z_1,\ldots,z_r)\in\calh^{r}$, we have the following inequality
 \begin{align}\label{hkeqn1}
K_{\calh}^{k_j}(t;z_{j},\gamma z_j)\leq  \frac{k_j^2}{\sqrt{2}\pi(k_j+1\slash 2)} \cdot\frac{1}{\cosh^{2k_j}({\rho_{\gamma_j, z_j}}/{2})} \int_{\rho_{\gamma_j, z_j}}
 ^{\infty} \frac{r e^{-r/2}dr}{\sqrt{\cosh(r)- \cosh({\rho_{\gamma_j, z_j}})}},
\end{align}
where $\rho_{\gamma_j, z_j} := \mathrm{d}_{\calh}(z_j, \gamma_j z_j)$. Now, for any $u\geq 0$, using the fact that
\begin{align*}
\cosh^{2k_j}(u) \geq \cosh^2 (u) \geq\frac{ e^{2u}}{4 },
\end{align*}
and combining it with inequality \eqref{hkeqn1}, we have
\begin{align}\label{hkeqn2}
 K_{\calh}^{k_j}(t;z_{j},\gamma_j z_j) \leq \frac{2\sqrt{2}k_j^2\cdot e^{-\rho_{\gamma_j, z_j}}}{\pi(k_j+1\slash 2)} \cdot \int_{\rho_{\gamma_j, z_j}}^{\infty}
  \frac{r e^{-r/2}dr}{\sqrt{\cosh(r)- \cosh({\rho_{\gamma_j, z_j}})}} .
\end{align}
From inequality (2.5) in \cite{abms}, we have the following inequality  
\begin{align}\label{hkeqn3}
\int_{\rho_{\gamma_j, z_j}}^{\infty}\frac{r e^{-r/2}dr}{\sqrt{\cosh(r)- \cosh({\rho_{\gamma_j, z_j}})}} \leq 2\sqrt{2}e^{-\rho_{\gamma_j, z_j}}.
\end{align}
 
Combining inequalities \eqref{hkeqn2} and \eqref{hkeqn3}, we have the following inequality  
\begin{align}\label{hkeqn4}
 K_{\calh}^{k_j}(t;z_{j},\gamma z_j) \leq \frac{8k_j^2\cdot e^{-2\rho_{\gamma_j, z_j}}}{\pi(k_j+1\slash 2)} .
\end{align}
Hence, combining inequalities \eqref{hkeqn0} and \eqref{hkeqn4}, we arrive at the following inequality
 \begin{align}\label{hkeqn}
 \mathcal{B}_{X^1}^{\uk}(z)\leq 
\sum_{\gamma=(\gamma_1,\ldots,\gamma_r)\in\Gamma^1}\,\prod_{j=1}^{r}K_{\calh}^{k_j}(t;z_j,\gamma_jz_j) \leq \sum_{\gamma=(\gamma_1,\ldots,\gamma_r)\in\Gamma^1}\,\prod_{j=1}^{r}\frac{8k_j^2\cdot e^{-2\rho_{\gamma_j, z_j}}}{\pi(k_j+1\slash 2)}.
\end{align}

Finally, we now define injectivity radius of a Hilbert modular variety, and state an inequality which will be very useful in section \ref{results2}. When $\Gamma^1$ is of {\it{Type (1)}}, put
\begin{align}\label{injecttype1}
r_{X^1} := \inf \{ \rho_{\gamma_j, z_j} \big|\, 1 \leq j \leq r,\, z=(z_1,\ldots,z_r) \in \calh^r, \,\gamma=(\gamma_1,\ldots,\gamma_r) \in \Gamma^1 \setminus \mathrm{Id}\}.
\end{align}

When $\Gamma^1$ is of {\it{Type (2)}}, let $\mathcal{C}$ denote the set of cusps of $\Gamma^1$, and for any $p\in\mathcal{C}$, let $\Gamma^{1}_{p}$ denote the stabilizer of the cusp $p$ in $\Gamma^1$ . Now put
\begin{align}\label{injecttype2}
r_{X^1} := \inf \bigg\lbrace \rho_{\gamma_j, z_j} |\,1 \leq j \leq d,\,z=(z_1,\ldots,z_d) \in \calh^d,\, \gamma=(\gamma_1,\ldots,\gamma_d) \in \Gamma^1\setminus\bigg(\sum_{p\in\mathcal{C}} \Gamma^1_{p}\bigg)\bigg\rbrace .
\end{align}
 
We now state  an adaptation of an inequality from \cite{jl} to our setting. For any positive, smooth, real-valued, and decreasing function $f$ defined on $\mathbb{R}_{\geq 0}$, and for any $\delta > r_{X^1}\slash 2$, we have the following inequality 
\begin{align}\label{jlineq}
 \int_0^{\infty} f(\rho) dN_{\Gamma}(z_j;\rho) \leq \int_0^{\delta} f(\rho) dN_{\Gamma}(z_j;\rho) + f(\delta) \frac{\sinh(r_{X^1}/2)\sinh(\delta)}{\sinh^2(r_{X^1}/4))}
 +\notag\\\frac{1}{2\sinh^2(r_{X^1}/4)} \int_{\delta}^{\infty} f(\rho) \sinh(\rho + r_{X^1}/2) d\rho,
\end{align}
where 
\begin{align*}
dN_{\Gamma}(z_j;\rho) := \mathrm{card}\,\bigg\{ \gamma |\,\gamma \in \Gamma^1\setminus\bigg(\sum_{p\in\mathcal{C}} \Gamma^1_{p}\bigg),\, \rho_{\gamma_j , z_j} \leq \rho\bigg\}.
\end{align*} 
\bigskip
\section{Cusp forms on quaternion algebras}\label{secauto}
\noindent
Let $F$ be a totally real field of degree $d$ over $\Q$. Let $\calo_F$ denote the ring of integers of $F$. Let $\Sigma=\Sigma_F$ denote the set of infinite places of $F$.  

We will first briefly recall some basic facts about quaternion algebras. See \cite{VignerasBook} for more  details. A quaternion algebra $B$ over $F$ is a central simple algebra of degree $4$ over $F$. Let $v$ be a place of $F$, and $B$ is said to be split or unramified at $v$ if $B_v := B \otimes_F F_v$ is isomorphic to the matrix algebra $M_2 (F_v)$. If $B_v$ is not isomorphic to the matrix algebra, $B$ is ramified at $v$. Take elements $a,b \in F^\times$ and let $B$ be the algebra generated by $i,j$ and $k$ satisfying $i^2 =a, j^2=b $ and $k = ij = -ji$. This is a quaternion algebra and in fact any quaternion algebra can be constructed this way. A place $v$ is unramified for $B$ if and only if the local Hilbert symbol $(a,b)_v=1$. Local global compatibility implies that the number of places where $B$ is ramified is a finite set with even number of elements.

Let $S$ be a finite set of places of $F$ such that $|S|$ is even. Up to isomorphism, this set determines uniquely a quaternion algebra $B$ that is ramified at precisely the primes in $S$ and unramified outside. Let $\Sigma_\ram := \Sigma \cap S$ be the infinite primes where $B$ is ramified  and $\Sigma_\spl := \Sigma \setminus \Sigma_\ram$ be the infinite places where $B$ is unramified. For any finite place $v$ of $F$, $B_v = B \otimes_F F_v$ is a central simple algebra of degree $4$  over $F_v$ and there are only two possibilities, namely, the algebra $M_2 (F_v)$ or the algebra corresponding to $1/2$ in the Brauer group $\mathrm{Br} (F_v) \cong \Q/\Z$.  For any $v \in \Sigma$, there are again only two possibilities for $B_v = B \otimes_F \R$, namely, $M_2 (\R)$ or the usual Hamiltonian quaternions 
\begin{align} \label{haqua}
\H= \{ a+bi+cj+dk \mid a,b,c,d \in \R, \text{with } i^2=j^2=k^2=ijk=-1\}.
\end{align}
By definition, $B_v = M_2 (\R)$ for $v \in \Sigma_\spl$ and $B_v = \H$ for $v \in \Sigma_\ram$. Let $r := |\Sigma_\spl|$ be the number of unramified infinite places. When $r=0$, the quaternion algebra is called definite. We will assume throughout this article that this is not the case, i.e., $r > 0$. The reason for this assumption is that the locally symmetric space we consider below will be a finite set of points when $B$ is definite and is not amenable to the analytic techniques we use later. 

Let $G$ be the restriction of scalars from $F$ to $\Q$ of the algebraic group associated to the units in $B$. Concretely, let $A$ be any $\Q$-algebra, then the $A$ points of $G$ are given by 
$$ G(A) = ( B \otimes_F (F \otimes_\Q A))^\times. $$
So $G$ is an algebraic group defined over $\Q$. Note that since it is possible to find a quadratic extension $K/F$ such that $B \otimes_F K \cong M_2 (K)$, the group $G$ is in fact a twisted form of $\GL_2$. Let $\A$ be the adele ring over $\Q$ and let $\A_f$ denote the finite part of the adeles. Let $G(\A)$ be the adelic points of $G$ which is defined as $\prod^\prime G(F_v)$, where the restricted product is over $G(\calo_{F,v})$ at all the finite places $v \not \in S$. Here $\calo_{F,v}$ is the valuation ring of $F_v$. Let $G_\infty = G(\R) = (B \otimes_F \R)^\times = \prod_{v \in \Sigma} B_v^\times$ the infinite part of $G$. It follows that $G_\infty = (\GL_2 (\R))^r \times (\H^\times)^{d-r}$. Let $K_\infty$ be the maximal compact subgroup of $G_\infty$. It is equal to $(O_2 (\R))^{r} \times (\H^1)^{d-r} $, where $\H^1$ is the subgroup of norm one elements in $\H$. Let $Z_\infty$ be the center of $G_\infty$. For an open compact subgroup $U \subset G(\A_f)$, the associated locally symmetric space that we are interested in is given by
$$ 
\Sh_U = \Sh^G_U = G(\Q) \backslash G(\A) /U (Z_\infty K_\infty)^\circ
$$
where $(Z_\infty K_\infty)^\circ$ is  the connected component of $Z_\infty K_\infty$ containing the identity element. This is the quaternionic Shimura variety attached to $B$ of level $U$. When $B=M_2 (F)$, these are usually referred to as Hilbert-Blumenthal varieties. The Hilbert-Blumenthal varieties are not compact and can be compactified by adding finitely many cusps. Whenever $B \not = M_2(F)$, the associated Shimura variety is compact. 

Let $G^1 \subset G$ be the algebraic group corresponding to the subgroup of norm $1$ elements $B^1 \subset B^\times$. Similar to the construction above, we consider the Shimura variety associated to $G^1$ of level $U^1 = U \cap G^1(\A_f)$ and this is denoted by $\Sh^1$. The natural inclusion $G^1 \subset G$ induces an open, quasi-finite morphism $\Sh^1_{U^1} \to \Sh_U$ at the level of Shimura varieties. 

Let $G_\infty^+ = (\GL_2 (\R)^+)^r \times (\H^\times)^{d-r}$ denote the subgroup of totally positive elements in $G_\infty$ and $K_\infty^+ = G_\infty^+ \cap K_\infty$. Let $\calh$ be the upper half-plane and choose a base point $z_0 = (z^0_{1}, \dots, z^0_{r})$ in $\calh^r$.  It is easy to check that $G_\infty^+/Z_\infty K_\infty^+ \cong \calh^r$  and the isomorphism is given by sending $ g= (g_\spl, g_\ram) \mapsto g_\spl (z_0)$. Here and elsewhere, $g_\spl \in (\GL_2 (\R)^+)^r$ is the component at the split places and $g_\ram \in  (\H^\times)^{d-r}$ is the component at the ramified places.  By strong approximation theorem, we can find finitely many $t_i \in G(\A_f)$ such that $G(\A) = \bigsqcup_{i=1}^h G(\Q) t_i U G_\infty^+$. The integer $h$ is the strict class number of $F$ and is independent of $B$ and $U$ under our assumption that $r > 0$.  This gives a decomposition of the Shimura variety into connected components,
$$
\Sh_U = \bigsqcup_{i=1}^h \Gamma_i \backslash \calh^r 
$$
by sending $\gamma t_i u g_\infty \mapsto (g_\infty)_\spl (z_0)$ for $\gamma \in G(\Q), uf \in U$ and $g_\infty \in G_\infty^+$. Here the groups $\Gamma_i = \Gamma_i (U) = G(\Q)^+ \cap t_i U G_\infty^+ t_i^{-1}$, which are also viewed as discrete subgroups of $(\GL_2(\R)^+)^r$ acting on $\calh^r$. Similarly, for the Shimura variety $\Sh^1_{U^1}$, we have a decomposition,
$$
\Sh^1_{U^1} = \bigsqcup_{i=1}^h \Gamma^1_i \backslash \calh^r,
$$
where $\Gamma^1_i = \Gamma_i \cap G^1 (\Q)$. In fact, the projection map $\pi_i : \Gamma^1_i \backslash \calh^r \to \Gamma_i \backslash \calh^r $ on the $i$th connected component is a finite \'etale covering with Galois group $\Gamma_i / \Gamma^1_i (F^\times \cap \Gamma_i)$. Note that all these Galois groups are conjugate to each other.

A weight for $G$ is an algebraic character on the maximal torus of $G$ at infinity and we identify this with an element $\uk = (k_\sigma) \in \Z [\Sigma] = \Z^d$. Let $\underline{1} = (1, \dots, 1) \in \Z[\Sigma]$ and a weight is called parallel if it is an integer multiple of $\underline{1}$. There is a partial ordering of the weights by formula $\uk \ge \uk^\prime$ if $k_\sigma \ge k^\prime_\sigma$ for each $\sigma \in \Sigma$.  We will assume that $\uk \ge \underline{2}$. Fix a parallel defect of $\uk$, i.e., a $0 \le \uv \in \Z[\Sigma]$ such that $\uk + 2\uv$. Note that the existence of the parallel defect $\uv$ is equivalent to saying that all the $k_\sigma$ have the same parity. This is what we call the parity condition on the weights. Without this assumption, the space of weight-$\uk$ cusp forms is zero.

For a weight $\uk$ as above, we again break it up as $(\uk_\spl, \uk_\ram)$, with $\uk_\spl \in \Z[\Sigma_\spl]$ and $\uk_\ram \in \Z[\Sigma_\ram]$.  For $\underline{z} = (z_1, \dots, z_d) \in \C^\times$ and weight $\uk$, we define $\underline{z}^{\uk} = \prod_{i=1}^d z_i^{k_i}$.

We now define certain representations that are needed to define automorphic forms. The underlying vector space of the representation $L(\uk_\ram, \uv_\ram, \C)$  is the complex vector space of polynomials in $2(d-r)$ variables $\{ X_\sigma, Y_\sigma \}_{\sigma \in \Sigma_\ram}$, which are homogeneous of degree $k_{\sigma} - 2$ in the variables   $X_\sigma, Y_\sigma$, for each $\sigma \in \Sigma_\ram$. This has dimension $\prod_{\sigma \in \Sigma_\ram} (k_\sigma - 1)$ over $\C$.  Given $P \in L(\uk_\ram, \uv_\ram, \C)$, there is a right action of $(\H^\times)^{d-r}$ on $P$ by viewing it as a subgroup of $\GL_2 (\C)^{d-r}$. Using this, define an action of $x =(x_f, x_\infty) \in G(\A)$ by
$$ 
P|_x = \nu (x_{\infty, \ram})^{v_\ram} P|_{x_{\infty, \ram}},
$$
where $\nu$ is the reduced norm map on the quaternions. This is the representation denoted by $L(k_\ram, v_\ram, \C)$. We can also convert this to a left action by setting $x \cdot P = P|_{x^{-1}}$. 

Our main references for definitions and basic properties of automorphic forms on $G$ will be \cite[\S 1]{Shimura81}, \cite[\S 2]{Hida88} and \cite{TilouineNotes}. 

An automorphy factor is a function $j: G_\infty^+ \times \calh^r \to (\C^\times)^{r}$ satisfying the cocycle condition
$$ 
j (g_1 g_2, z) =  j(g_1, g_2 (z)) j (g_2, z), \quad \forall g_1,g_2 \in G_\infty^+.
$$
We are interested in the automorphy factor associated to the weight-$\uk$ given by the formula $ j_{\uk} (g,z) = \prod_{\sigma \in \Sigma_\spl} (c_\sigma z_\sigma + d_\sigma)^{k_\sigma}$. 
A cusp form is a function $f: G(\A) \to L (\uk_\ram, \uv_\ram, \C)$ such that
\begin{itemize}
\item $f(\gamma g) = f(g)$, for all $\gamma \in G(\Q)$,
\item  $f|_{u} = f $, for all $u \in U G_\infty^+$, where the slash operator is defined as
$$ f|_{u} (g) = j_{\uk} (u_\infty, z_0)^{-1} \nu (u_\infty)^{(\uv + \uk - \underline{1})_\spl} [f(gu^{-1}) |_{u_\infty}]. $$
Here $\nu$ is again the reduced norm on $G_\infty$.
\item appropriate holomorphicity condition, see \cite[2.4b]{Hida88}, and
\item when $B = M_2 (F)$ a cuspidality condition and if $F=\Q$, there is in addition a uniform boundedness condition.
\end{itemize}
Let $S_{\uk,\uv} (U, \C)$ denote the complex vector space of all such forms. It is finite dimensional. Similar to the decomposition of the Shimura variety into connected components, we can decompose the space of adelic cusp forms into a direct sum of spaces of classical cusp forms on $\calh^r$. We describe this process now. For any point $z \in \calh^r$, choose an element $g_\infty \in G_\infty^+$ such that $g_\infty (z_0) = z$. Then for any $u \in G(\A_f)$, define a function $f: \calh^r \to L(\uk_\ram, \uv_\ram, \C)$ as
 $$ 
 f_u (z) = j_{\uk} (g_\infty, z_0) \nu (g_\infty)^{-(\uv + \uk - \underline{1})_\spl } [f(ug_\infty)|_{u_\infty^{-1}}].
 $$
 It can be verified that this function does not depend on the choice of $g_\infty$ made in the definition. If we let $\phi_i = f_{t_i}$, it is easy to check that 
 \begin{itemize}
 \item $\phi_i (\gamma (z)) = \nu (\gamma)^{-(\uv + \uk - \underline{1})_\spl} j_{\uk} (\gamma, z) [\gamma \cdot f(z)]$ for all $\gamma \in \Gamma_i (U)$. 
 \item $\phi_i$ is holomorphic as a function of $z$, and
 \item appropriate cuspidality condition when $B=M_2(F)$ and a boundedness at the cusps condition for $F=\Q$.
 \end{itemize}

We let $S_{\uk,\uv} (\Gamma_i, \C)$ denote the vector space of all functions that satisfy these conditions. Then the map sending $f \mapsto (\phi_1, \dots, \phi_h)$ defines an isomorphism 
$$ 
S_{\uk,\uv} (U, \C) \cong \oplus_{i=1}^h S_{\uk,\uv} (\Gamma_i (U), \C)
$$
between the two types of cusp forms.

Take $\Gamma$ to be one of the $\Gamma_i$. Let $\phi, \phi^\prime \in S_{\uk,\uv} (\Gamma, \C)$ be two cusp forms. Then, the Petersson inner product $\langle \phi, \phi^\prime \rangle$ is given 
by the following formula
$$
\langle \phi, \phi^\prime \rangle =  \int_{\Gamma \backslash \calh^r} \phi(z) \cdot {^t} \overline{\phi^\prime (z) } \mathrm{Im} (z)^{\uk}\ \hypnvol(z).
$$
Finally extend this definition to adelic cusp forms as follows. If $f \leftrightarrow (\phi_1, \dots, \phi_h)$ and $f^\prime \leftrightarrow (\phi^\prime_1, \dots, \phi^\prime_h)$, then
$$
\langle f, f^\prime \rangle = h^{-1} \sum_{i=1}^h \langle \phi_i, \phi_i^\prime \rangle.
$$

Finally, we describe the connection between cusp forms and holomorphic sections of a Shimura variety on a vector bundle. Let $W_{\uk,\uv}$ be the vector space given by $\C (\uk_\spl) \otimes \nu^{-(\uv + \uk -  \underline{1})_\spl} \otimes L(\uk_\ram, \uv_\ram, \C)$, where 
\begin{itemize}
\item[-] $\C (\uk_\spl)$ is the one dimensional representation of the torus $(\C^\times)^r$ with the action given by $ (t_\sigma) \mapsto \prod_{\sigma \in \Sigma_\spl} t_\sigma^{k_\sigma}$, and
\item[-] the representation $\nu^{-(\uv + \uk - \underline{1})_\spl} \otimes L(\uk_\ram, \uv_\ram, \C)$ of $G_\infty$, where $\GL_2 (\R)^r$ acts by a power of the determinant $\nu$ and $(\H^\times)^{d-r}$ acts on $L(\uk_\ram, \uv_\ram, \C)$ via the left action defined earlier.
\end{itemize}
For $\Gamma$ as above, by putting together the actions above gives a left action on the space $\calh^r \times W_{\uk,\uv}$ by the rule
$$
\gamma \cdot (z, x) = (\gamma z, j_{\uk} (\gamma, z) \cdot \nu^{-(\uv + \uk -\underline{1})_\spl} \cdot \gamma_\ram \cdot x).
$$
Let $\mathcal{W}_{\uk} = \mathcal{W}_{\uk,\uv}$ denote the vector bundle
$$\xymatrix{
\Gamma \backslash \calh^r \times W_{\uk,\uv} \ar^{p}[d] \\
\Gamma \backslash \calh^r
}$$
induced by this action and let $p$ denote the projection map. Note that matrices of the form $\mat \epsilon 0 0 \epsilon \in \Gamma$, where $\epsilon$ is a unit  in $\calo_F$, act trivially on $\calh^r$. So the action of such elements on $W_{\uk, \uv}$ should be trivial.  In fact, the above scalar matrix acts on $W_{\uk, \uv}$  as multiplication by $\epsilon^{\uk_\spl -2(\uv+\uk -\underline{1})_\spl - (\uk+2\uv-\underline{2})_\ram} = \epsilon^{-(\uk + 2\uk -\underline{2})}$; which is a power of $\mathrm{Nm}(\epsilon)$.  So we now assume either that $F^\times \cap \Gamma$ consists only of totally positive units or that $\uk +2 \uv -\underline{2}$ is an even multiple of $\underline{1}$. In either of these cases, the action of these scalar matrices is trivial.  

We can now identify the space of cusp forms with global sections via the isomorphism 
$$
S_{\uk, \uv} (\Gamma, \C) \longrightarrow H^0 (\Gamma \backslash \calh^r, \mathcal{W}_{\uk,\uv})
$$
$$
\phi \mapsto (s: z \mapsto (z, \phi(z))).
$$

\vspace{0.15cm} \noindent
The decomposition of the vector space $W_{\uk, \uv}$ does not imply that the vector bundle $\mathcal{W}_{\uk, \uv}$ decomposes into a tensor product of a vector bundle and a line bundle. This is precisely because of the presence of units in $F^\times \cap \Gamma$ as such elements do not act trivially on each of the components.  We need to pullback the vector bundle to the finite Galois cover $\Gamma^1 \backslash \calh^r$ considered earlier. When the vector bundle $\mathcal W_{\uk, \uv}$ is pulled back to this space, we get the vector bundle $\Gamma^1 \backslash \calh^r \times W_{\uk, \uv}$ where the action is just restriction from $\Gamma$ to $\Gamma^1$. Now when restricted to $\Gamma^1$, the action simplifies because the determinant terms vanish and since $F^\times \cap \Gamma^1 = 1$, there is indeed a tensor product decomposition of $\pi^* \mathcal W_{\uk} = \mathcal F \otimes \mathcal L^{\otimes \uk}$ into a vector bundle $\mathcal F$ and a power of a line bundle $\mathcal L$. Note that the vector bundle $\mathcal F$ depends only on $\uk_\ram$ and the line bundle depends only on $\uk_\spl$ and neither of them is dependent on the choice of $\uv$.

\bigskip
\section{Estimates of cusp forms}
\noindent
In this section, we prove all the three main theorems that we stated in section \ref{statementofresults}.   
\subsection{Asymptotic estimates of cusp forms}\label{results1}
In this section, using the results from section \ref{subsecbergman}, we derive asymptotic estimates of the Bergman kernel associated to automorphic cusp forms  defined over quaternion algebras, and prove an average version of the holomorphic QUE conjecture. 

As in the previous sections, let $X = \Gamma \backslash \calh^r$ denote a connected component of the Shimura variety $\Sh_U$ and let $X^1 = \Gamma^1\backslash \calh^r$ denote the finite Galois cover of degree $[X^1:X]$ with Galois group $E$. Let $\mathcal W_{\uk,\uv}$ denote the vector bundle  whose space of global sections is isomorphic to the space of cusp forms of  weight-$(\uk,\uv)$. As mentioned earlier, we are interested in the pullback of $\mathcal W_{\uk}$ to $X^1$ and the pullback doesn't depend on $\uv$, so we suppress this from the notation of the vector bundle. 

The Hermitian metric $\|\cdot\|_{\mathcal{W}_{\uk}}$ on $H^{0}\big(X,\mathcal{W}_{\uk})$ is the Petersson metric 
$\|\cdot\|_{\mathrm{pet}}$ defined in equation \eqref{petmetric} on $\mathcal{S}_{\uk,\underline{v}}(\Gamma)$. Hence, from the definition of Bergman kernel $\bk(z)$ associated to the vector space $\mathcal{S}_{\uk,\underline{v}}(\Gamma)$, which is described in equation  \eqref{bcuspdefn}, we can conclude that
\begin{align}\label{eqn1}
\bk(z)= \mathcal{B}_{X}^{\mathcal{W}_{\uk}}(z),
\end{align}
where  $\mathcal{B}_{X}^{\mathcal{W}_{\uk}}(z)$ is the Bergman kernel associated to the vector bundle $\mathcal{W}_{\uk}$ described in equation \eqref{bkdefn}.

\vspace{0.2cm} \noindent
The vector bundle $\pi^{\ast}\mathcal{W}_{\underline{k}_{\circ}}$ splits into $\mathcal{F}\otimes \mathcal{L}$ on $X^1$, where $\mathcal{L}$ is a line bundle, and $\mathcal{F}$ is a vector bundle. Furthermore, for any $k = k_\circ N \in k_{\circ}\mathbb{Z}_{>0}$, the vector bundle $\pi^{\ast}\mathcal{W}_{\uk}$ splits into $\mathcal F \otimes \mathcal L^{\otimes N}$ on $X^1$. As before, we let $\mathcal B_{X^1}^{\mathcal F \otimes \mathcal L^{\otimes N}}$ be the associated Bergman kernel for $z \in X^1$.

\noindent
Let $H^{0}\big( X^1, \calf \otimes \mathcal{L}^{\otimes N} \big)$ denote the space of holomorphic global sections of the line bundle $\mathcal{L}^{\otimes N}$.  For any $f \in H^{0}\big( X^1,  \calf \otimes \mathcal{L}^{\otimes N} \big)$, as in equation \eqref{petmetric}, the  metric $\|f\|_{\mathrm{ \calf \otimes \mathcal{L}^{\otimes N}}}^{2}$ at the point $z=(z_1,\dots,z_r)\in X^{1}$ is defined as
\begin{align}\label{petmetricl}
\|f\|_{\mathrm{\calf \otimes \mathcal{L}^{\otimes N}}}^{2}(z):= \bigg(\prod_{i=1}^{r}y^{k}_{i}\bigg) (f(z) \cdot {^t}\overline{f(z)}).
\end{align}

Let the Shimura variety $X$ be compact. Let $k = k_\circ N$, and $z\in X$ with $z^{1}\in \pi^{-1}(z)$. We now  apply equation \eqref{maeqn} to the compact complex manifold $X$ with the Hermitian vector bundle $\mathcal{W}_{\uk}$, and to the finite Galois cover $\pi:X^{1}\longrightarrow X$ with the Hermitian vector bundle $\pi^{\ast}\mathcal{W}_{\uk}= \mathcal{F}\otimes\mathcal{L}^{\otimes N}$. For $N\gg0$, we have
\begin{align}\label{eqn2}
\mathcal{B}_{X}^{\mathcal{W}_{\uk}}(z)=\sum_{\gamma\in E} \mathcal{B}_{X^{1}}^{\mathcal{F}\otimes\mathcal{L}^{\otimes N}}(\gamma z^{1}).
\end{align}


\noindent
Now suppose that $X$ is non-compact. Then, the vector bundle $\calf$ is trivial and $\mathcal{W}_{\uk} = \mathcal{L}^{\otimes N}$. Let $\lbrace f_{1},\dots,f_{\delta_{\uk}}\rbrace$ denote an orthonormal basis of  $H^{0}\big(X,\mathcal{L}^{\otimes N} \big)$ with respect to  the natural inner-product which induces the Hermitian metric  $\|\cdot\|_{\mathcal{L}^{\otimes N}}$. Here $\delta_{\uk}$ denotes the  dimension of $H^{0}\big(X,\mathcal{L}^{\otimes N} \big)$. Then, we can construct an orthonormal basis $T$ for $H^{0}\big(X^{1}, \mathcal{L}^{\otimes N} \big)$, such that the set $\lbrace f_{1}\slash \sqrt{[X^1:X]},\dots,f_{\delta_{k}}\slash \sqrt{[X^1:X]}\rbrace\subset T$. So let 
\begin{align}\label{basis}
T:=\bigg\lbrace f_{1}^{1}:=\frac{f_{1}}{\sqrt{[X^1:X]}},\dots,f_{\delta_{k}}^{1}:= \frac{f_{\delta_{k}}}{\sqrt{[X^1:X]}},\dots,f_{\delta_{k}^{1}}^{1}\bigg\rbrace 
\end{align}
denote an orthonormal basis of  $H^{0}\big(X^{1},  \mathcal{L}^{\otimes N}\big)$,  where $\delta_{k}^{1}$ denotes the dimension of $H^{0}(X^{1},\mathcal{L}^{\otimes N})$. From the choice of the basis $T$ described in \eqref{basis}, we find that 
\begin{align}\label{eqn32}
\bk(z)\leq [X^1:X] \mathcal{B}_{X^{1}}^{\mathcal{L}^{\otimes N}}(z^1) 
\end{align}
for any $z^1 \in \pi^{-1} (z)$. 

\noindent
Recall that the weights we consider are of the form $\uk=(k\dots,k,\uk_{\ram})\in\mathbb{Z}_{\ge 2}^{r}\times \mathbb{Z}_{\ge 2}^{d-r}$ for $k = k_\circ N$, and the weight-$\uk$ continues to satisfy the parity condition.

\vspace{0.1cm}
\begin{thm}\label{thm1}
Let the notation be as above, and let $X$ be a compact Shimura variety. Then for $z\in X$, we have
\begin{align*}
\lim_{k\rightarrow \infty}\frac{1}{k^{r}}\mathcal{B}_{X}^{\uk}(z)=\frac{[X^1:X] \cdot \mathrm{rank} (\calf)}{(4\pi)^{r}},
\end{align*}
and the convergence of the above limit is uniform in $z\in X$. 
\end{thm}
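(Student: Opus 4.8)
The plan is to combine the Galois-cover identity \eqref{maeqn} with Bouche's asymptotic \eqref{boucheeqn2}, after first computing the curvature of the line bundle $\mathcal L$ on $X^1$ explicitly. First I would record that, since $X$ is compact, $X^1$ is also compact and the decomposition $\pi^\ast\mathcal W_{\uk} = \mathcal F\otimes\mathcal L^{\otimes N}$ holds, where $\mathcal F$ depends only on $\uk_\ram$ (hence has fixed rank) and $\mathcal L$ carries the metric whose square-norm of a section at $z=(z_1,\dots,z_r)$ is $\bigl(\prod_{i=1}^r y_i\bigr)|s(z)|^2$ as in \eqref{petmetricl}. Thus the weight function is $\phi(z) = -\sum_{i=1}^r \log y_i$ (up to the convention fixing $e^{-\phi}$), and a direct computation gives $\partial\overline\partial\phi(z) = \sum_{i=1}^r \frac{dz_i\wedge d\overline z_i}{4 y_i^2}$ — more precisely the Hessian has eigenvalues $\tfrac14$ (each with multiplicity one) relative to the coordinate frame, and relative to the Kähler form $\omega = \hypn$ one gets $\det_\omega\bigl(c_1(\mathcal L)(z)\bigr) = \det_\omega\bigl(\tfrac{i}{2\pi}\partial\overline\partial\phi\bigr) = \bigl(\tfrac{1}{4\pi}\bigr)^r$. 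The main bookkeeping point here is to be careful with the factors of $2\pi$ and the normalization $c_1(\mathcal L) = \tfrac{i}{2\pi}\partial\overline\partial\phi$, together with the relation between the Petersson normalization of \eqref{petmetric} and the metric on $\mathcal L^{\otimes N}$ on $X^1$ (where the weight is $k = k_\circ N$, so the line bundle $\mathcal L^{\otimes N}$ has weight function $N$ times that of $\mathcal L$ — but Bouche's theorem is stated for $\mathcal L^{\otimes k}$ with fixed $\mathcal L$, so one applies it with the variable being $N$ and notes $N\to\infty \iff k\to\infty$ and $k^r = k_\circ^r N^r$).

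Next I would apply \eqref{boucheeqn2} on the compact manifold $X^1$ with the line bundle $\mathcal L$ and vector bundle $\mathcal F$: for each fixed $\gamma\in E$ and $z^1\in\pi^{-1}(z)$,
\begin{align*}
\lim_{N\to\infty}\frac{1}{N^r}\mathcal B_{X^1}^{\mathcal F\otimes\mathcal L^{\otimes N}}(\gamma z^1) = \mathrm{rank}(\mathcal F)\cdot\det_\omega\bigl(c_1(\mathcal L)(\gamma z^1)\bigr) = \frac{\mathrm{rank}(\mathcal F)}{(4\pi)^r},
\end{align*}
where the last equality uses that $\det_\omega\bigl(c_1(\mathcal L)\bigr)$ is the constant $(4\pi)^{-r}$ computed above, independent of the point. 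Crucially, Bouche's convergence is uniform on $X^1$, so summing the (finitely many, $|E| = [X^1:X]$) terms in \eqref{eqn2} is harmless: using \eqref{eqn1} and \eqref{eqn2},
\begin{align*}
\lim_{N\to\infty}\frac{1}{N^r}\mathcal B_X^{\uk}(z) = \lim_{N\to\infty}\frac{1}{N^r}\sum_{\gamma\in E}\mathcal B_{X^1}^{\mathcal F\otimes\mathcal L^{\otimes N}}(\gamma z^1) = [X^1:X]\cdot\frac{\mathrm{rank}(\mathcal F)}{(4\pi)^r},
\end{align*}
and the convergence is uniform in $z$ because it is uniform in $z^1$ on $X^1$ and $\pi$ is a finite covering. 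Finally I would convert from $N$ to $k = k_\circ N$: since $\tfrac{1}{k^r} = \tfrac{1}{k_\circ^r}\tfrac{1}{N^r}$, one has to check the normalization in the statement matches — and indeed here one should double-check whether the theorem intends $\mathcal L$ on $X^1$ to already be normalized so that $\pi^\ast\mathcal W_{\uk}=\mathcal F\otimes\mathcal L^{\otimes k}$ rather than $\mathcal L^{\otimes N}$; reading \eqref{petmetricl} with exponent $k$ suggests the metric there is the weight-$k$ Petersson metric, so the cleaner route is to regard the family as $\mathcal F\otimes\mathcal L^{\otimes k}$ with $\mathcal L$ the weight-$1$ line bundle and apply Bouche directly in the variable $k$ (legitimate since $k$ ranges over the infinite set $k_\circ\Z_{>0}$ and \eqref{maeqn} holds for $k\gg0$).

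The step I expect to be the main obstacle — really the only non-formal step — is the explicit curvature computation and getting every normalization constant right: reconciling (i) the sign/convention in $\|s\|_{\mathcal L}^2 = e^{-\phi}|s|^2$ versus the Petersson factor $\prod y_i^{k_i}$; (ii) the factor $\tfrac{i}{2\pi}$ in $c_1$; (iii) the definition of $\det_\omega$ relative to the Kähler form $\hypn$ whose local expression carries its own $y_i^{-2}$; and (iv) the passage $k\leftrightarrow N$ through the factor $k_\circ$. Once the constant $\det_\omega(c_1(\mathcal L)) = (4\pi)^{-r}$ is pinned down, the rest is a routine application of \eqref{maeqn} and \eqref{boucheeqn2} together with the uniformity of Bouche's limit. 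For the uniformity assertion in the theorem, I would simply note that $X$ is compact, $\pi$ is a finite covering map, and uniform convergence on $X^1$ (which Bouche provides) descends to uniform convergence on $X$ since the finite sum in \eqref{eqn2} is over a fixed finite group $E$.
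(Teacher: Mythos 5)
Your proposal is correct and follows essentially the same route as the paper: the identity \eqref{eqn1}--\eqref{eqn2} for the finite Galois cover, Bouche's uniform asymptotic \eqref{boucheeqn2} applied on the compact cover $X^1$, and the explicit curvature computation giving $c_1(\mathcal L)=\tfrac{k_\circ}{4\pi}\hypn$ (equivalently $\tfrac{1}{4\pi}\hypn$ in your weight-one normalization), so that $\det_{\hypn}(c_1(\mathcal L))$ produces the constant $(4\pi)^{-r}$ after the $k\leftrightarrow N=k/k_\circ$ conversion, with uniformity descending through the finite sum over $E$. The only difference from the paper is the bookkeeping choice of normalizing $\mathcal L$ as the weight-$1$ versus weight-$k_\circ$ line bundle, which you correctly note cancels in the final constant.
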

\begin{proof}
For $z\in X$, let $z^{1}\in \pi^{-1}(z)$. For $N \gg 0$, combining equations \eqref{eqn1} and \eqref{eqn2}, we have 
\begin{align}\label{proofeqn1}
\mathcal{B}_{X}^{\uk}(z)=\mathcal{B}_{X}^{\mathcal{W}_{\uk}}(z)=
\sum_{\gamma\in E} \mathcal{B}_{X^{1}}^{\mathcal{F}\otimes\mathcal{L}^{
\otimes N}}(\gamma z^{1}).
\end{align}
Applying estimate \eqref{boucheeqn2} to the complex manifold $X^{1}$ with its natural Hermitian metric $\hypn$ and the vector bundle $\mathcal{F}\otimes\mathcal{L}^{\otimes N}$, we deduce that 
\begin{align}\label{proofeqn2}
\lim_{k\rightarrow \infty}\frac{k_{\circ}^{r}}{k^{r}} \mathcal{B}_{X^{1}}^{\mathcal{F}\otimes\mathcal{L}^{\otimes N}}(z^{1})= \mathrm{rank} (\calf) \cdot\mathrm{det}_{\hypn}\big(c_{1}(\mathcal{L})(z^{1})\big), 
\end{align}
and the convergence of the above limit is uniform in $z^{1}\in X^{1}$. Now from the definition of Petersson inner-product defined in \eqref{petmetricl}, and from the 
definition of the curvature form described in \eqref{curvatureform}, for  $z^{1}=(z_{1}=
x_{1}+iy_{1},\ldots,z_{r}=x_{r}+y_{r})\in X^{1}$, we have
\begin{align}\label{proofeqn3}
c_{1}(\mathcal{L})(z^{1})=-\frac{i}{2\pi}\partial\overline{\partial}\log\bigg(\prod_{j=1}^{r}y_{j}^{k_{\circ}}\bigg)
=-\frac{i}{2\pi}\sum_{j=1}^{r}\partial\overline{\partial}\log\big(y_{j}^{k_{\circ}}\big).
\end{align}
For any $1\leq j\leq r$, we compute 
\begin{align}\label{proofeqn4}
\frac{i}{2\pi}\partial\overline{\partial}\log\big(y_{j}\big) =
\frac{i}{2\pi}\partial\overline{\partial}\log\left(\frac{z_{j}-\overline{z}_{j}}{2i}\right)=
-\frac{i}{2\pi}\partial\left(\frac{d\overline{z}_{j}}{z_{j}-\overline{z}_{j}}\right)\notag\\=
\frac{i}{2\pi}\cdot\frac{dz_{j}\wedge d\overline{z}_{j}}{\big(z_{j}-\overline{z}_{j}\big)^{2}}
=-\frac{i}{8\pi}\cdot\frac{dz_{j}\wedge d\overline{z}_{j}}{ y_{j}^{2}}=-\frac{1}{4\pi}\hyp(z_{j}).
\end{align}
Combining equations \eqref{proofeqn3} and \eqref{proofeqn4}, we arrive at
\begin{align*}
 c_{1}(\mathcal{L})(z^{1})=\frac{k_{\circ}}{(4\pi)}\hypn(z^{1})\implies
\mathrm{det}_{\hypn}\big(c_{1}(\mathcal{L})(z^{1})\big) =\frac{k_{\circ}^{r}}{(4\pi)^{r}}. 
\end{align*}
Hence, for any $\gamma\in E$ and $z^{1}\in X^{1}$, combining the above equation with equation \eqref{proofeqn2}, we have
\begin{align*}
 \lim_{k\rightarrow \infty}\frac{1}{k^{r}} \mathcal{B}_{X^{1}}^{\mathcal{F}\otimes\mathcal{L}^{\otimes N}}(\gamma z^{1})=\frac{\mathrm{rank}(\calf)}{(4\pi)^{r}},
\end{align*}
and the convergence of the above limit is uniform in $z^{1}\in X^{1}$. Hence, combining above  equation with equation \eqref{proofeqn1}, we arrive 
\begin{align*}
&\lim_{k\rightarrow \infty}\frac{1}{k^{r}}\mathcal{B}_{X}^{\uk}(z)=
\lim_{k\rightarrow \infty}\frac{1}{k^{r}}\sum_{\gamma\in E}
\mathcal{B}_{X^{1}}^{\mathcal{F}\otimes\mathcal{L}^{\otimes N}}(\gamma z^{1})=\\&\sum_{\gamma\in E} \lim_{k\rightarrow \infty}\frac{1}{k^{r}} \mathcal{B}_{X^{1}}^{\mathcal{F}\otimes\mathcal{L}^{\otimes N}}(\gamma z^{1})=\frac{[X^1:X] \cdot \mathrm{rank}(\calf)}{(4\pi)^{r}},
\end{align*}
and the convergence of the above limit is uniform in $z\in X$. This completes the proof of the theorem. 
\end{proof}
\begin{cor}\label{cor1}
Keeping the same notation as above, we have
\begin{align*}
 \lim_{k\rightarrow \infty}\frac{1}{\delta_{\uk}}\mathcal{B}^{\uk}_{X}(z)\hypnvol(z)=\shyp(z),
\end{align*}
and the convergence of the above limit is uniform in $z\in X$. 
\end{cor}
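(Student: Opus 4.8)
The plan is to realize $\delta_{\uk}$ itself as an integral of the Bergman kernel and then divide out by it, so that the statement follows from Theorem~\ref{thm1} together with the elementary cancellation of the normalizing constant. First I would note that, since $\{f_1,\dots,f_{\delta_{\uk}}\}$ is an orthonormal basis of $\mathcal{S}_{\uk,\uv}(\Gamma)$ for the Petersson inner product, integrating the defining series \eqref{bcuspdefn} against $\hypnvol$ term by term and using the formula for $\langle f_i,f_i\rangle$ from Section~\ref{secauto} yields
\begin{align*}
\int_X \mathcal{B}_X^{\uk}(z)\,\hypnvol(z)=\sum_{i=1}^{\delta_{\uk}}\langle f_i,f_i\rangle=\delta_{\uk}.
\end{align*}
Here compactness of $X$ guarantees that each summand is integrable and that the finite sum commutes with the integral. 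Hence, for every $z\in X$,
\begin{align*}
\frac{1}{\delta_{\uk}}\mathcal{B}_X^{\uk}(z)\,\hypnvol(z)=\frac{\mathcal{B}_X^{\uk}(z)\,\hypnvol(z)}{\int_X \mathcal{B}_X^{\uk}(w)\,\hypnvol(w)}.
\end{align*}

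Next I would divide numerator and denominator by $k^{r}$ and appeal to Theorem~\ref{thm1}, which gives $k^{-r}\mathcal{B}_X^{\uk}(z)\to c$ uniformly in $z\in X$, where $c:=[X^1:X]\cdot\mathrm{rank}(\calf)/(4\pi)^{r}$. Uniform convergence then shows $k^{-r}\mathcal{B}_X^{\uk}(z)\,\hypnvol(z)\to c\,\hypnvol(z)$ uniformly, and, since $\vx(X)<\infty$, it also licenses passing to the limit under the integral sign:
\begin{align*}
\frac{1}{k^{r}}\int_X \mathcal{B}_X^{\uk}(w)\,\hypnvol(w)\longrightarrow c\cdot\vx(X).
\end{align*}
Because $c\,\vx(X)>0$ (indeed $\mathrm{rank}(\calf)\ge 1$, $[X^1:X]\ge 1$ and $\vx(X)>0$), the quotient of these two uniformly convergent families again converges uniformly, giving
\begin{align*}
\lim_{k\to\infty}\frac{1}{\delta_{\uk}}\mathcal{B}_X^{\uk}(z)\,\hypnvol(z)=\frac{c\,\hypnvol(z)}{c\,\vx(X)}=\frac{\hypnvol(z)}{\vx(X)}=\shyp(z),
\end{align*}
the constant $c$ cancelling between numerator and denominator.

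I do not expect a genuine obstacle here: the analytic content is entirely contained in Theorem~\ref{thm1}. The only points needing care are the two interchanges — of the finite sum with the integral and of the limit with the integral — both of which are immediate from the compactness of $X$ and the uniformity asserted in Theorem~\ref{thm1}, together with the observation that the asymptotic constant is independent of $z$; this $z$-independence is exactly why the constant disappears after normalizing by $\delta_{\uk}$ and forces the limit to be the probability volume form $\shyp$.
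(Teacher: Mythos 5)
Your proof is correct and follows essentially the same route as the paper's: both identify $\delta_{\uk}=\int_X\mathcal{B}_X^{\uk}(z)\,\hypnvol(z)$ via orthonormality of the basis, invoke the uniform asymptotic $k^{-r}\mathcal{B}_X^{\uk}(z)\to C$ from Theorem~\ref{thm1} to get $\delta_{\uk}=C\,\vx(X)k^{r}+o(k^{r})$, and then cancel the constant $C$ in the quotient. The paper merely writes this more tersely; no substantive difference.
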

\begin{proof}
From Theorem \ref{thm1}, for $k = k_\circ N$ with $N\gg 0$, and $z\in X$, we have
\begin{align}\label{cor1eqn1}
\mathcal{B}_{X}^{\uk}(z)=C\cdot k^{r}+o(k^{r}),
\end{align}
where $C:=\frac{[X^1:X]  \mathrm{rank}(\calf)}{(4\pi)^{r}}$. This implies that for $k = k_\circ N$ with $N\gg 0$, we have
\begin{align}\label{cor1eqn2}
 \delta_{\uk}=\int_{X}\mathcal{B}_{X}^{\uk}(z)\hypnvol(z)=C\cdot\vx(X)\cdot k^{r}+o(k^{r}).
\end{align}
Combining equations \eqref{cor1eqn1} and \eqref{cor1eqn2}, for $z\in X$, we compute
\begin{align*}
\lim_{k\rightarrow\infty}\frac{1}{\delta_{\uk}}\mathcal{B}_{X}^{\uk}(z)\hypnvol(z)=\frac{1}{\vx(X)}\hypnvol(z)=\shyp(z),
\end{align*}
and the convergence of the limit is uniform in $z\in X$. This completes the proof of the corollary. 
\end{proof}
\begin{cor}\label{cor2}
Let the notation be as above,  and let $X$ be a noncompact Shimura variety. Then, for any $k\in k_{\circ}\mathbb{Z}_{>0}$ and $z\in X$, we have
\begin{align*}
\limsup_{k\rightarrow \infty}\frac{1}{k^{d}}\mathcal{B}_{X}^{\uk}(z)\leq \frac{[X^1:X]}{(4\pi)^{d}}. 
\end{align*} 
\begin{proof}
For any $k= k_\circ N \in k_{\circ}\mathbb{Z}_{>0}$ and $z\in X$, from arguments similar to the ones used in the proof of Theorem \ref{thm1}, and from equation \eqref{bermaneqn}, for $z^1 \in X^{1}$, we have
\begin{align}
 \mathcal{B}_{X^{1}}^{\mathcal{L}^{\otimes N}}(z^{1})\leq \frac{1}{(4\pi)^{d}}. 
\end{align}
Combining the above inequality with inequality \eqref{eqn32} completes the proof of the corollary. 
\end{proof}
\end{cor}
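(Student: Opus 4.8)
The plan is to mirror the structure of the proof of Theorem \ref{thm1}, but replacing Bouche's precise asymptotic \eqref{boucheeqn2} by Berman's upper bound \eqref{bermaneqn}, and replacing the exact Galois-covering identity \eqref{eqn2} by the one-sided inequality \eqref{eqn32}, which is what is available in the noncompact case. First I would recall that when $X$ is noncompact we have $d = r$, the vector bundle $\calf$ is trivial, and $\mathcal{W}_{\uk} = \mathcal{L}^{\otimes N}$ where $k = k_\circ N$; thus $\bk(z) = \mathcal{B}_X^{\mathcal{W}_{\uk}}(z)$ by \eqref{eqn1}, and by the orthonormal-basis comparison \eqref{basis} we get $\bk(z) \le [X^1:X]\,\mathcal{B}_{X^1}^{\mathcal{L}^{\otimes N}}(z^1)$ for any $z^1 \in \pi^{-1}(z)$, which is precisely \eqref{eqn32}.

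Next I would apply Berman's inequality \eqref{bermaneqn} to the noncompact complex manifold $X^1 = \Gamma^1\backslash\calh^d$ equipped with the hyperbolic Kähler metric $\hypn$ and the line bundle $\mathcal{L}$ (whose Petersson-type Hermitian metric is given by \eqref{petmetricl}, with weight $k_\circ$ on the $X^1$-level bundle $\mathcal{L}$). This yields
\begin{align*}
\limsup_{k\rightarrow\infty}\frac{k_\circ^d}{k^d}\,\mathcal{B}_{X^1}^{\mathcal{L}^{\otimes N}}(z^1) \le \mathrm{det}_{\hypn}\big(c_1(\mathcal{L})(z^1)\big).
\end{align*}
Then I would carry out exactly the same curvature computation as in \eqref{proofeqn3} and \eqref{proofeqn4}: since the metric on $\mathcal{L}$ has weight factor $\prod_{j=1}^d y_j^{k_\circ}$, one gets $c_1(\mathcal{L})(z^1) = \frac{k_\circ}{4\pi}\hypn(z^1)$, hence $\mathrm{det}_{\hypn}\big(c_1(\mathcal{L})(z^1)\big) = k_\circ^d/(4\pi)^d$, and therefore $\limsup_{k\to\infty} k^{-d}\,\mathcal{B}_{X^1}^{\mathcal{L}^{\otimes N}}(z^1) \le 1/(4\pi)^d$.

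Finally, combining this with \eqref{eqn32} gives
\begin{align*}
\limsup_{k\rightarrow\infty}\frac{1}{k^d}\,\mathcal{B}_X^{\uk}(z) \le [X^1:X]\cdot\limsup_{k\rightarrow\infty}\frac{1}{k^d}\,\mathcal{B}_{X^1}^{\mathcal{L}^{\otimes N}}(z^1) \le \frac{[X^1:X]}{(4\pi)^d},
\end{align*}
which is the claim. The main subtlety — and the reason one only obtains an inequality rather than an equality — is that \eqref{maeqn} (the exact summation over deck transformations) is only justified for compact $X$, so in the noncompact case one must settle for the basis-inclusion bound \eqref{eqn32}; and invoking Berman's theorem requires checking that our $(X^1, \mathcal{L}, \hypn)$ satisfies the hypotheses of \cite{berman} (in particular that the hyperbolic metric and the Petersson metric on $\mathcal{L}$ have the requisite regularity near the cusps), which is where one should be careful, though this is already implicit in the way \eqref{bermaneqn} was quoted.
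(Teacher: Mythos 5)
Your proposal is correct and follows essentially the same route as the paper: the one-sided basis comparison \eqref{eqn32} in place of the exact covering identity, Berman's bound \eqref{bermaneqn} on $X^1$, and the curvature computation $\mathrm{det}_{\hypn}\big(c_1(\mathcal{L})\big)=k_\circ^d/(4\pi)^d$ carried over from Theorem \ref{thm1}. Your explicit remark on why only an inequality survives in the noncompact case is exactly the point the paper leaves implicit.
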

\begin{cor}
Let the notation be as above, and let $X$ be a noncompact Shimura variety. Let $A$ be a compact subset of $X$. Then, for $k \in k_{\circ}\mathbb{Z}_{>0}$ and $z\in X$, we have the following estimate
\begin{align*}
\lim_{k\rightarrow\infty}\frac{1}{k^{d}}\mathcal{B}_{X}^{\uk}(z)= O_{A}(1),
\end{align*}
where the implied constant depends on $A$.
\begin{proof}
From Corollary \ref{cor1}, for $k \in k_{\circ}\mathbb{Z}_{>0}$ and $z\in X$, we have
\begin{align*}
\limsup_{k\rightarrow\infty}\frac{1}{k^{d}}\mathcal{B}_{X}^{\uk}(z)\leq
\frac{1}{(4\pi)^{d}}. 
\end{align*}
As $A\subset X$ is compact, we can find a constant $C$ (which depends on $A$ and is independent of $k$) such that 
\begin{align*}
\lim_{k\rightarrow\infty}\frac{1}{k^{d}}\mathcal{B}_{X}^{k}(z)\leq C, 
\end{align*}
which completes the proof of the corollary.  
\end{proof}
\end{cor}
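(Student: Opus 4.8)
I read the conclusion as the assertion that there is a constant $C_A<\infty$, depending only on the compact set $A$, with $k^{-d}\mathcal{B}_X^{\uk}(z)\le C_A$ for every $z\in A$ and every admissible weight $k\in k_\circ\mathbb{Z}_{>0}$ (equivalently, the family $\{k^{-d}\mathcal{B}_X^{\uk}\}_k$ is uniformly bounded on $A$). The obvious starting point is Corollary \ref{cor2}, which gives, for each fixed $z\in X$, the sharp asymptotic bound $\limsup_{k\to\infty}k^{-d}\mathcal{B}_X^{\uk}(z)\le\frac{[X^1:X]}{(4\pi)^{d}}$. What this does not yet supply is \emph{uniformity} over the compactum $A$, so the plan is: (1) promote the pointwise $\limsup$ bound to a bound $k^{-d}\mathcal{B}_X^{\uk}(z)\le\frac{2[X^1:X]}{(4\pi)^{d}}$ valid for all $z\in A$ once $k\ge k_0(A)$; and (2) for the finitely many remaining weights $k\in k_\circ\mathbb{Z}_{>0}\cap[1,k_0(A))$ observe that $\mathcal{B}_X^{\uk}$ is a finite sum of continuous functions on $X$ (by \eqref{bcuspdefn}), hence bounded on the compact $A$; then $C_A$ is the maximum of these finitely many suprema together with $\frac{2[X^1:X]}{(4\pi)^{d}}$.

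Step (1) is the substantive point. Berman's inequality \eqref{bermaneqn} is stated pointwise, but its proof is micro-local: the $\limsup$ bound is produced from a local model of the Bergman kernel near each point, with error terms that are locally uniform. Hence, applied to the line bundle $\mathcal L$ on $X^1$ (whose curvature is $c_1(\mathcal L)=\tfrac{k_\circ}{4\pi}\hypn>0$ everywhere, by the computation \eqref{proofeqn4}), Berman's argument yields $\limsup_{k\to\infty}k^{-d}\mathcal{B}_{X^1}^{\mathcal L^{\otimes N}}(z^1)\le\frac{1}{(4\pi)^{d}}$ \emph{uniformly for $z^1$ in any compact subset of $X^1$}; combined with \eqref{eqn32}, which gives $\mathcal{B}_X^{\uk}(z)\le[X^1:X]\,\mathcal{B}_{X^1}^{\mathcal L^{\otimes N}}(z^1)$ for $z^1\in\pi^{-1}(z)$, and the compactness of $\pi^{-1}(A)$ (as $\pi$ is a finite covering), this is exactly step (1). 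A concrete alternative, which also removes the need for step (2), is to run the heat-kernel majorisation of section \ref{subsechkestimates} on $X^1$ (which is of \emph{Type (2)} in the noncompact case $B=M_2(F)$): by \eqref{hkeqn0}--\eqref{hkeqn},
\begin{align*}
\mathcal{B}_{X^1}^{\uk}(z^1)\ \le\ \sum_{\gamma\in\Gamma^1}\prod_{j=1}^{d}\frac{8k^{2}\,e^{-2\rho_{\gamma_j,z_j}}}{\pi(k+1/2)}\ \le\ \Big(\tfrac{8k}{\pi}\Big)^{d}\sum_{\gamma\in\Gamma^1}\prod_{j=1}^{d}e^{-2\rho_{\gamma_j,z_j}},
\end{align*}
and the last series converges locally uniformly on $\calh^d$: up to a bounded factor it is the Poincar\'e series $\sum_{\gamma}\prod_j\cosh^{-2}(\rho_{\gamma_j,z_j})$, which is dominated near each point by a Hilbert modular Eisenstein series of exponent $2>1$ (this is the finiteness of the right-hand side of \eqref{hkeqn}). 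Since $\pi^{-1}(A)$ is compact it misses the cusps, so the series is $\le C'_A$ there and, dividing by $k^{d}$, $k^{-d}\mathcal{B}_X^{\uk}(z)\le[X^1:X](8/\pi)^{d}C'_A$ for all admissible $k$ and all $z\in A$.

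The main obstacle, in either route, is precisely the passage from a \emph{pointwise} asymptotic to one that is \emph{uniform on compacta}: in the first route this is the claim that the error terms in Berman's micro-local construction are locally uniform (true, but it requires entering the proof rather than quoting \eqref{bermaneqn}); in the second it is the locally uniform convergence of the Poincar\'e/Eisenstein series for the irreducible lattice $\Gamma^1\subset\mathrm{PSL}_2(\mathbb{R})^{d}$, together with the remark that a compact subset of $X^1$ is disjoint from the cusps — near a cusp the parabolic elements make this series unbounded, which is the phenomenon responsible for the extra factor $\prod k_j^{1/2}$ in estimate \eqref{thm2estimate2} relative to the interior behaviour $\prod k_j$.
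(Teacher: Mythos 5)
Your proposal is correct, and your first route is the paper's own argument made rigorous. The paper's proof simply quotes the pointwise bound $\limsup_{k}k^{-d}\mathcal{B}_{X}^{\uk}(z)\leq [X^1:X](4\pi)^{-d}$ (it miscites Corollary \ref{cor1} instead of Corollary \ref{cor2} and drops the factor $[X^1:X]$, both harmless slips) and then asserts that compactness of $A$ produces a uniform constant $C$ --- exactly the step you correctly flag as non-automatic, since a pointwise $\limsup$ bound does not by itself yield a bound uniform on a compactum. Your two supplements (local uniformity of the error terms in Berman's micro-local construction, and continuity of $\mathcal{B}_{X}^{\uk}$ via \eqref{bcuspdefn} to dispose of the finitely many remaining weights) are precisely what is needed to close that gap, and are consistent with how the paper invokes Bouche's locally uniform convergence in the compact case. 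Your second, heat-kernel route is genuinely different from what the paper does here: it yields an unconditional bound $k^{-d}\mathcal{B}_{X}^{\uk}(z)\leq C_A$ for \emph{all} admissible $k$, not just asymptotically, at the price of a worse constant; it is essentially the interior part of the Type (2) analysis of Theorem \ref{proofmainthm2.2}, where, as you observe, the cusp-stabilizer sum controlled by Lemma \ref{auxlemma} contributes only $O_A\bigl(\prod_j k_j^{1/2}\bigr)$ once $y_j$ is bounded on $\pi^{-1}(A)$, so the extra factor responsible for \eqref{thm2estimate2} disappears on compacta. Either route proves the corollary; the second is more self-contained and quantitative than the paper's.
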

\bigskip
\subsection{Quantitative estimates of Hilbert modular cusp forms}\label{results2}
\noindent
In this section, we derive quantitative estimates of the Bergman kernel associated to classical Hilbert modular cusp forms, following the heat kernel technique from \cite{jk2}. For the rest of the article, we assume that the quaternion algebra $B$ is split at all finite places of $F$.
\begin{thm}\label{proofmainthm2}
Let the notation be as above, and let $\Gamma^1$ be of {\it{Type (1)}}. Then, for any $\uk=(k_1,\ldots,k_r)\in (2\mathbb{Z}_{>0}^{r})$, we have the following estimate
 \begin{align*}
\sup_{z\in X^1} \mathcal{B}_{X^1}^{\uk}(z) \leq\Bigg(36+ \frac{1}{\sinh^2(r_{X^1}/4)} \Bigg)^r \cdot  \prod_{j=1}^{r}k_j=O_{X^1}\Bigg(\prod_{j=1}^{r}k_j\Bigg).
 \end{align*}
 \end{thm}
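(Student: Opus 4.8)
The goal is a clean sup-norm bound of order $\prod_j k_j$ for $\mathcal{B}_{X^1}^{\uk}$ when $\Gamma^1$ is cocompact (Type (1)). The starting point is inequality \eqref{hkeqn}, which already bounds $\mathcal{B}_{X^1}^{\uk}(z)$ by the sum over $\gamma \in \Gamma^1$ of $\prod_{j=1}^r \frac{8 k_j^2 e^{-2\rho_{\gamma_j,z_j}}}{\pi(k_j+1/2)}$. Since this product factors, I would first try to organize the sum over $\Gamma^1$ using the counting function $dN_\Gamma(z_j;\rho)$, writing each factor as a Stieltjes integral $\int_0^\infty \frac{8k_j^2}{\pi(k_j+1/2)} e^{-2\rho}\, dN_\Gamma(z_j;\rho)$; but because $\Gamma^1$ acts diagonally, the sum over $\gamma$ does not literally split into a product of one-variable sums. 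So the more robust route is to bound the full lattice sum by the product of the individual lattice-point sums via the standard trick: for a diagonal lattice in a product, $\sum_{\gamma}\prod_j g_j(\rho_{\gamma_j,z_j}) \le \prod_j \big(\sum_{\gamma\in\Gamma^1} g_j(\rho_{\gamma_j,z_j})\big)$ is false in general, so instead I would invoke the cocompactness directly: the number of $\gamma\in\Gamma^1$ with all $\rho_{\gamma_j,z_j}\le R$ is $O(e^{cR})$, and the exponential decay $e^{-2\rho}$ in each coordinate dominates, yielding convergence with a bound that only involves $r_{X^1}$ and the number of coordinates $r$.

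**Main steps.** (1) Apply \eqref{hkeqn} to reduce to estimating $\Sigma := \sum_{\gamma\in\Gamma^1}\prod_{j=1}^r \frac{8k_j^2}{\pi(k_j+1/2)}\,e^{-2\rho_{\gamma_j,z_j}}$. Pull the $k_j$-dependent constants out front: since $\frac{8k_j^2}{\pi(k_j+1/2)} = \frac{8}{\pi}\cdot\frac{k_j^2}{k_j+1/2}\le \frac{8}{\pi}k_j$, this gives $\Sigma \le \big(\tfrac{8}{\pi}\big)^r\big(\prod_j k_j\big)\sum_{\gamma}\prod_j e^{-2\rho_{\gamma_j,z_j}}$, so the whole weight dependence is already the desired $\prod_j k_j$. (2) Bound the purely geometric quantity $Q(z):=\sum_{\gamma\in\Gamma^1}\prod_{j=1}^r e^{-2\rho_{\gamma_j,z_j}}$ uniformly in $z\in X^1$. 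Here I would separate the identity term ($\gamma=\mathrm{Id}$ contributes $1$) from the rest, and for $\gamma\ne\mathrm{Id}$ use that every $\rho_{\gamma_j,z_j}\ge r_{X^1}$ so that $\prod_j e^{-2\rho_{\gamma_j,z_j}} \le e^{-2(r-1)r_{X^1}}\cdot e^{-2\rho_{\gamma_{j_0},z_{j_0}}}$ for, say, $j_0=1$ — but more cleanly, bound $\prod_j e^{-2\rho_{\gamma_j,z_j}}\le e^{-\sum_j \rho_{\gamma_j,z_j}}\cdot e^{-\rho_{\gamma_1,z_1}}$, wait, better to just use $\prod_j e^{-2\rho_{\gamma_j,z_j}} = e^{-2\sum_j \rho_{\gamma_j,z_j}}$ and note $\sum_j\rho_{\gamma_j,z_j} = \rho^{(r)}_{\gamma,z}$ is the hyperbolic distance on $\mathcal H^r$ in the $\ell^1$-metric, for which a lattice-point count $N(z;R)=O(e^{CR})$ holds with $C$ depending only on $r_{X^1}$; then $Q(z)=\int_0^\infty e^{-2R}\,dN(z;R) \le 2\int_0^\infty e^{-2R}N(z;R)\,dR$ converges. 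The cocompact lattice-point bound $N(z;R) \le c_1(r_{X^1})\,e^{c_2(r_{X^1})R}$ is the key geometric input; with $2>c_2$ (which one arranges by a cruder direct volume-packing estimate showing the balls of radius $r_{X^1}/2$ around the $\gamma_j z_j$ are disjoint, so the count is controlled by $\mathrm{vol}(\text{ball of radius }R+r_{X^1}/2)/\mathrm{vol}(\text{ball of radius }r_{X^1}/2)$), the integral is finite and bounded purely in terms of $r_{X^1}$ and $r$. (3) Assemble: $\sup_{z}\mathcal{B}_{X^1}^{\uk}(z)\le \big(\tfrac{8}{\pi}\big)^r\big(\prod_j k_j\big)\sup_z Q(z) \le \big(36 + \sinh^{-2}(r_{X^1}/4)\big)^r\prod_j k_j$, matching the stated constant after the elementary hyperbolic estimates are tracked carefully.

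**The main obstacle.** The delicate point is getting exactly the claimed constant $\big(36 + \sinh^{-2}(r_{X^1}/4)\big)^r$, rather than merely some constant $C(r_{X^1},r)$. This forces one to not use the crude diagonal-distance bound above but instead to handle each coordinate separately with the one-variable machinery of \cite{jk2}: for each fixed $j$, bound $\int_0^\infty \tfrac{8k_j^2}{\pi(k_j+1/2)}e^{-2\rho}\,dN_\Gamma(z_j;\rho)$ by splitting at a suitable $\delta$ and applying the Jorgenson–Luo-type inequality \eqref{jlineq} with $f(\rho)=e^{-2\rho}$, which produces terms with $\sinh(r_{X^1}/2)$, $\sinh(\delta)$, and $\sinh^2(r_{X^1}/4)$ in the denominators; optimizing and using $\rho\ge r_{X^1}$ in the first piece yields a per-coordinate bound of the shape $\big(36 + \sinh^{-2}(r_{X^1}/4)\big)k_j$. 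Then — and this is where cocompactness is genuinely used again — one must argue that the $r$-fold diagonal sum is bounded by the product of the $r$ one-variable bounds; this is not automatic and requires either a Fubini-type argument on the counting measures or a direct packing argument that the product structure of the bound survives. I expect reconciling the factorization with the diagonal action, while keeping the explicit constant, to be the crux; everything else is the bookkeeping of elementary hyperbolic-geometry inequalities already assembled in Section \ref{subsechkestimates}.
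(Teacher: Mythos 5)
Your overall strategy is the paper's strategy: start from \eqref{hkeqn}, absorb $\tfrac{8k_j^2}{\pi(k_j+1/2)}\le 4k_j$ to extract $\prod_j k_j$, and bound the remaining geometric sum coordinate by coordinate via \eqref{jlineq} with $f(\rho)=e^{-2\rho}$. The one step you leave unresolved --- and twice declare ``false in general'' or ``not automatic'' --- is precisely where the paper makes a one-line observation: for \emph{nonnegative} summands one always has
\begin{align*}
\sum_{\gamma\in\Gamma^1}\ \prod_{j=1}^r a_j(\gamma)\ \le\ \prod_{j=1}^r\Bigg(\sum_{\gamma\in\Gamma^1}a_j(\gamma)\Bigg),
\end{align*}
because the right-hand side expands to a sum over all $r$-tuples $(\gamma^{(1)},\dots,\gamma^{(r)})\in(\Gamma^1)^r$ and the left-hand side is the subsum over the diagonal $\gamma^{(1)}=\cdots=\gamma^{(r)}$. (Your instinct that the factorization ``is false in general'' applies to the reverse inequality, not this one.) This is exactly the last line of \eqref{mainthm2proof1}. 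Once you grant it, your ``main obstacle'' paragraph \emph{is} the paper's proof: take $\delta=3r_{X^1}/4$ in \eqref{jlineq}, so that the first term equals $1$ by the definition \eqref{injecttype1} of $r_{X^1}$, the second is at most $8$ (via $\sinh(r_{X^1})=2\sinh(r_{X^1}/2)\cosh(r_{X^1}/2)$ and $\cosh u\le e^u$), and the third is at most $\tfrac{1}{4\sinh^2(r_{X^1}/4)}$; multiplying the resulting per-coordinate bound $9+\tfrac{1}{4\sinh^2(r_{X^1}/4)}$ by $4k_j$ gives $\big(36+\tfrac{1}{\sinh^2(r_{X^1}/4)}\big)k_j$, and the product over $j$ is the stated estimate. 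No Fubini-type argument or additional packing lemma is needed.

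Your detour through the lattice-point count $N(z;R)=O(e^{cR})$ for the distance $\sum_j\rho_{\gamma_j,z_j}$ on $\calh^r$ is a legitimate alternative for the qualitative bound $O_{X^1}\big(\prod_j k_j\big)$, but, as you concede, it does not recover the explicit constant. It does have one real merit worth noting: when $r\ge 2$ an irreducible cocompact lattice projects densely into each $\mathrm{PSL}_2(\R)$ factor, so the single-coordinate sums $\sum_{\gamma}e^{-2\rho_{\gamma_j,z_j}}$ and the single-coordinate infimum in \eqref{injecttype1} are genuinely delicate, whereas counting with respect to the full distance on $\calh^r$ is controlled by cocompactness alone. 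So your unease about the coordinate-wise factorization, although misplaced as a matter of the formal inequality above, does point at the actual soft spot of the argument; the formal gap in your write-up, however, is simply that you never close the step whose resolution is the trivial diagonal-versus-product comparison.
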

 \begin{proof}
Using inequality \eqref{hkeqn} from section \ref{subsechkestimates}, for any $z=(z_1,\ldots,z_r)\in X^1$, we find
 \begin{align}\label{mainthm2proof1}
 \mathcal{B}_{X^1}^{\uk}(z) & \leq \sum_{\gamma=(\gamma_1,\ldots,\gamma_r)\in\Gamma^1}\,\prod_{j=1}^{r}\frac{8k_j^2\cdot e^{-2\rho_{\gamma_j, z_j}}}{\pi(k_j+1\slash 2)} \notag \\
& \leq \sum_{\gamma=(\gamma_1,\ldots,\gamma_r)\in\Gamma^1}\,\prod_{j=1}^{r} 4k_j\cdot e^{-2\rho_{\gamma_j, z_j}} \\
&\leq \prod_{j=1}^{r} 4k_j\cdot\sum_{\gamma=(\gamma_1,\ldots,\gamma_r)\in\Gamma^1} e^{-2\rho_{\gamma_j, z_j}}. \notag
 \end{align}
For any $1\leq j \leq r$, observe that the function $e^{-2\rho_{\gamma_j, z_j}}$ is a smooth, positive, real-valued, and decreasing function on $\mathbb{R}_{>0}$. For any $1\leq j\leq r$, 
observe that 
\begin{align*}
 \sum_{\gamma=(\gamma_1,\ldots,\gamma_r) \in \Gamma^1} e^{-2\rho_{\gamma_j, z_j}}  \leq \int_0^{\infty} e^{-2\rho} dN_{\Gamma}(z_j;\rho).
 \end{align*}
 As the function $e^{-2\rho}$ is a monotonically decreasing function for $\rho \in \mathbb{R}_{\geq 0},$ for any $1\leq j \leq r$, using inequality \eqref{jlineq}, we derive
 \begin{multline}\label{mainthm2proof2}
 \sum_{\gamma=(\gamma_1,\ldots,\gamma_r)\in\Gamma^1 } e^{-2\rho_{\gamma_j, z_j}} \leq \int_0^{3r_{X^1}\slash 4} e^{-2\rho} dN_{\Gamma}(z_j;\rho) 
 + \frac{e^{-3r_{X^1}\slash 2}\sinh(r_{X^1}/2) \sinh(3r_{X^1}/4)}{\sinh^2(r_{X^1}/4)}  \\ 
 + \frac{1}{2 \sinh^2(r_{X^1}/4)} \int_{3r_{X^1}\slash 4}^{\infty} e^{-2\rho} \sinh\big(\rho + r_{X^1}\slash 2\big) d\rho.
\end{multline}
Express the three terms in the RHS of the above inequality as $T_1 + T_2 + T_3$. From the definition of the injectivity radius $r_{X^1}$ in \eqref{injecttype1}, we have
\begin{align}\label{mainthm2proof3}
T_1 = 1.
\end{align}
Using the fact that $\sinh(u)$ is a monotone increasing function, and the inequality $\cosh(u) \leq e^u$ holds for all $u \geq 0$, we have 
\begin{align}\label{mainthm2proof4}
T_2 &\leq \frac{e^{-3r_{X^1}\slash 2}\sinh(r_{X^1}\slash 2)\sinh(r_{X^1})}{\sinh^2(r_{X^1}\slash4)} \notag \\
&= 8 e^{-3r_{X^1}\slash2} \cosh^2(r_{X^1}\slash4) \cosh(r_{X^1}\slash 2)
 \\ 
 &\leq 8 e^{-3r_{X^1}\slash 2} e^{r_{X^1}} \leq 8. \notag
\end{align}
Using the fact that $\sinh(u) \leq e^u/2$ for all $u \geq 0$, we derive the  estimate
\begin{align}\label{mainthm2proof5}
T_3 \leq \frac{e^{r_{X^1}\slash2}}{4 \sinh^2(r_{X^1}/4)}\int_{3r_{X^1}\slash4}^{\infty} e^{-\rho} d\rho
  =  \frac{e^{-r_{X^1}\slash{4}}}{4\sinh^2(r_{X^1}/4)}
 \leq  \frac{1}{4\sinh^2(r_{X^1}\slash4)}.
\end{align}
For any $1\leq j \leq d$, combining inequalities \eqref{mainthm2proof2}, \eqref{mainthm2proof3}, \eqref{mainthm2proof4}, and \eqref{mainthm2proof5}, we arrive at the following estimate
\begin{align}\label{mainthm2proof6}
 \sum_{\gamma \in \Gamma^1 } e^{-2\rho_{\gamma_j, z_j}}  \leq \Bigg(9+ \frac{1}{4\sinh^2(r_{X^1}\slash4)} \Bigg).
\end{align}
Combining estimates \eqref{mainthm2proof1} and \eqref{mainthm2proof6} completes the proof of the theorem.
 \end{proof}
\begin{rem}
The estimate that we derived for $\mathcal{B}_{X^1}^{k}(z)$ in Theorem \ref{proofmainthm2} depends only on the injectivity radius $r_{X^1}$, which is bounded from below, as $X^1$  is compact. Furthermore, following similar arguments as in \cite{anil2} (Lemma $6.4$ in section $6$), it is easy to see that the lower bound for $r_{X^1}$ remains stable in covers,  which implies that our estimate for $\mathcal{B}_{X^1}^{k}(z)$ is stable in covers of compact Hilbert modular varieties.  
\end{rem}
Let $\Gamma^1$ be of {\it{Type (2)}}, i.e., $\Gamma^1$ is a finite index subgroup of $\mathrm{PSL}_{2}(\mathcal{O}_{F})$ without elliptic fixed points. Recall that we assume that $\Gamma$ has no hyperbolic-elliptic elements. Both these two assumptions are only to ease the notation, and the case when  $\Gamma^1$ admits both elliptic and hyperbolic-elliptic elements can be easily tackled.

Furthermore, without loss of generality, let us assume that $\infty:=(\infty,\ldots,\infty)$ is the only cusp of $\Gamma^1$ with stabilizer $\Gamma^{1}_{\infty}$. This assumption is only to ease the notational complexity in the proofs of the next two results. 

Put $\Gamma_0^1 := \mathrm{PSL}_{2}(\mathcal{O}_{F})$, and let $\Gamma_{0,\infty}^{1}$ denote the stabilizer of the cusp $\infty$ in $\Gamma^{1}_{0}$. Let $\{\sigma_1,\ldots\sigma_d\}$ denote the set of embeddings of the number field $F$ in $\mathbb{R}$. So, we have
\begin{align*}
\Gamma^1_{\infty} \subset \Gamma^1_{0,\infty}= \\&\Bigg \{ \bigg(
\begin{matrix}
 \varepsilon_1 & \alpha_1\\
 0 & \varepsilon^{-1}_1
\end{matrix}\bigg),\dots,\bigg(\begin{matrix}
 \varepsilon_d & \alpha_d\\
 0 & \varepsilon^{-1}_d
 \end{matrix} \bigg)\Big |\,\varepsilon\in \mathcal{O}^{\times}_{F},  \sigma_j(\varepsilon)=\varepsilon_j;\, \alpha\in \mathcal{O}_{F}, \sigma_j(\alpha)=\alpha_j;\,
1\leq j\leq d \Bigg \}.
\end{align*}

The following computation is useful in proving estimate \eqref{thm2estimate2}. 

\begin{lem}\label{auxlemma}
With notation as above, for a fixed $\uk=(k_1,\ldots,k_d)\in (2\mathbb{Z}_{>0}^{d})$, a fixed $\varepsilon\in\mathcal{O}_{F}^{\times}$, and a fixed $z=(z_1=x_1+iy_1,\ldots,z_d=x_d+iy_d)\in\calh^d$, we have the following estimate
\begin{align}\label{auxlemmaeqn}
\sum_{\alpha \in \mathcal{O}_{F}} \prod_{j=1}^d \frac{\big({4\varepsilon^2_j y^2_j}\big)^{k_{j}}}{\bigg(\big((1- \varepsilon^2_j)x_j-\varepsilon_j\alpha_j\big)^2 + \big(1+\varepsilon^2_j\big)^2 y^2_j\bigg)^{k_j}} \leq \prod_{j=1}^{d} \frac{\Gamma(k_j- 1/2)}{\Gamma(k_j)}\cdot  \frac{\big(\varepsilon_{j}\big)^{2k_{j}-1}y_j} {\big((1+ \varepsilon^2_j)\slash2 \big)^{2k_j-1}}.
 \end{align}
\end{lem}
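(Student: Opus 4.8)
The plan is to recast each summand as a product of powers of $\cosh(\rho/2)$ — the same hyperbolic quantity that already appears in \eqref{hkeqn1} — then compare the resulting sum over the lattice $\mathcal{O}_F$ with a Euclidean integral, and finally evaluate that integral by a Beta-function identity.

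First I would identify the $j$th factor in \eqref{auxlemmaeqn} with a power of $\cosh$ of a hyperbolic half-distance. For the element $\gamma = \smallmat{\varepsilon}{\alpha}{0}{\varepsilon^{-1}}$ one has $\gamma_j z_j = \varepsilon_j^2 z_j + \varepsilon_j \alpha_j$ and $\mathrm{Im}(\gamma_j z_j) = \varepsilon_j^2 y_j$, so computing $\cosh \rho_{\gamma_j, z_j} = 1 + |\gamma_j z_j - z_j|^2/(2 \varepsilon_j^2 y_j^2)$ and applying the half-angle identity $\cosh^2(u/2) = (1+\cosh u)/2$ gives
\begin{align*}
\frac{(4\varepsilon_j^2 y_j^2)^{k_j}}{\bigl(((1-\varepsilon_j^2)x_j - \varepsilon_j\alpha_j)^2 + (1+\varepsilon_j^2)^2 y_j^2\bigr)^{k_j}} = \cosh^{-2k_j}(\rho_{\gamma_j, z_j}/2).
\end{align*}
Consequently the left-hand side of \eqref{auxlemmaeqn} equals $\sum_{\alpha \in \mathcal{O}_F} \prod_{j=1}^d h_j(\sigma_j(\alpha))$, where $h_j(s) := (4\varepsilon_j^2 y_j^2)^{k_j}\bigl(((1-\varepsilon_j^2)x_j - \varepsilon_j s)^2 + (1+\varepsilon_j^2)^2 y_j^2\bigr)^{-k_j}$ is, as a function of the real variable $s$, positive, at most $1$ (because $(1+\varepsilon_j^2)^2 \ge 4\varepsilon_j^2$), even about its maximum at $s = (1-\varepsilon_j^2)x_j/\varepsilon_j$, and strictly decreasing in the distance from that maximum.

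Next I would pass from the sum to an integral. The ring $\mathcal{O}_F$ embeds via $(\sigma_1, \dots, \sigma_d)$ as a full-rank lattice in $\mathbb{R}^d$, and $\prod_j h_j$ is coordinatewise unimodal and decreasing away from its peak, so a coordinatewise monotonicity comparison — writing $\mathcal{O}_F = \bigoplus_i \mathbb{Z}\omega_i$ for an integral basis and comparing the lattice sum with $\int_{\mathbb{R}^d}\prod_j h_j(t_j)\, dt$, handling separately the lattice points near each peak (where $h_j$ is close to its value $\le 1$) — bounds the sum by a constant multiple of $\prod_{j=1}^d \int_{\mathbb{R}} h_j(t)\,dt$. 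Each one-dimensional integral is evaluated by the substitution $u = \varepsilon_j t - (1-\varepsilon_j^2)x_j$, which turns it into $\frac{(4\varepsilon_j^2 y_j^2)^{k_j}}{|\varepsilon_j|}\int_{\mathbb{R}}(u^2 + (1+\varepsilon_j^2)^2 y_j^2)^{-k_j}\,du$, and the classical formula $\int_{\mathbb{R}}(u^2+a^2)^{-k}\,du = \sqrt{\pi}\,a^{1-2k}\,\Gamma(k-1/2)/\Gamma(k)$ with $a = (1+\varepsilon_j^2)y_j$ then yields, after simplification, the per-$j$ factor $\tfrac{\Gamma(k_j-1/2)}{\Gamma(k_j)}\cdot\tfrac{\varepsilon_j^{2k_j-1}y_j}{((1+\varepsilon_j^2)/2)^{2k_j-1}}$ appearing on the right, up to an explicit numerical constant.

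I expect the main obstacle to be exactly this sum-to-integral passage over $\mathcal{O}_F$: the lattice does not split as a product, so the sum cannot simply be factored, and the overshoot coming from the lattice points near the peaks of the $h_j$, together with the covolume $|d_F|^{1/2}$, has to be controlled precisely enough to land inside the constant claimed in \eqref{auxlemmaeqn}. The approach I would take here is to dominate $h_j$ pointwise by a fixed multiple of its sliding average over an interval of bounded length — legitimate because $h_j$ is monotone in the distance from its center — thereby converting the lattice sum into a genuine Euclidean integral over a slightly enlarged region at the cost of an explicit constant; after that, what remains is the elementary Beta-integral computation above together with the bookkeeping needed to match the stated constant.
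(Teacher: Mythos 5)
Your proposal follows essentially the same route as the paper's proof: reduce the lattice sum to a product of one--dimensional integrals of the form $\int_{\mathbb{R}} (u^2+a^2)^{-k_j}\,du$ with $a=(1+\varepsilon_j^2)y_j$ (up to rescaling) and evaluate these by the classical Gamma--function identity; your preliminary rewriting of each factor as $\cosh^{-2k_j}(\rho_{\gamma_j,z_j}/2)$ is correct but plays no role in the estimate itself (the paper performs that identification separately, where the lemma is applied). The one step where you genuinely depart from the paper is the sum--to--integral comparison, and you have put your finger on exactly the right spot. The paper's own treatment is to bound $\sum_{\alpha\in\mathcal{O}_F}\prod_j h_j(\sigma_j(\alpha))$ by $\prod_j\sum_{\alpha}h_j(\sigma_j(\alpha))$ (valid for positive terms, since $\mathcal{O}_F$ injects into the product of its coordinate projections) and then to majorize each coordinate sum by $\int_{-\infty}^{\infty}h_j\,d\alpha_j'$ with constant $1$; as written this is delicate, because for $d\ge 2$ the projection $\sigma_j(\mathcal{O}_F)$ is dense in $\mathbb{R}$ and the intermediate one--variable sums do not converge, so any correct argument must compare the full rank-$d$ lattice sum with the $d$-dimensional integral at once, which is precisely what your sliding--average device does. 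The price, as you yourself acknowledge, is a multiplicative constant depending on $F$ (the covolume of $\mathcal{O}_F$ and the count of lattice points near the peaks of the $h_j$), so your argument yields \eqref{auxlemmaeqn} only up to a factor $C_F$ rather than with the exact right-hand side as stated. That weaker form is all that the sole application of the lemma requires, namely the $O_{X^1}\big(\prod_j k_j^{3/2}\big)$ bound of Theorem \ref{proofmainthm2.2}; but if you want the inequality literally as printed you would have to show that your comparison constant can be taken to be $1$, which your outline does not establish and which does not follow from the monotonicity argument alone.
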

\begin{proof}
For a fixed, fixed $\varepsilon\in\mathcal{O}_{F}^{\times}$, and a fixed $z=(z_1=x_1+iy_1,\ldots,z_d=x_d+iy_d)\in\calh^d$, using the fact that $\mathcal{O}_{F}$ is isomorphic to $\mathbb{Z}^d$ as a lattice, we compute
\begin{align}\label{proofauxlemmaeqn1} 
&\sum_{\alpha \in \mathcal{O}_{F}} \prod_{j=1}^d \frac{\big({4\varepsilon^2_j y^2_j}\big)^{k_{j}}}{\bigg(\big((1- \varepsilon^2_j)x_j-\varepsilon_j\alpha_j\big)^2 + \big(1+\varepsilon^2_j\big)^2 y^2_j\bigg)^{k_j}} \notag\\
 &= \sum_{\alpha^{\prime} \in {\frac{1}{2} \mathcal{O}_{F}}} \prod_{j=1}^d \frac{\big(\varepsilon_{j}y_j\big)^{2k_j}}{\bigg(\bigg(\frac{(1- \varepsilon^2_j)x_j}{2} -\varepsilon_j\alpha^{\prime}\bigg)^2 + \frac{(1+\varepsilon^2_j)^2 y^2_j}{4}\bigg)^{k_{j}}} 
 \notag\\  
 & \leq \prod_{j=1}^d  \sum_{\alpha^{\prime} \in {\frac{1}{2} \mathcal{O}_{F}}}\frac{\big(\varepsilon_{j}y_j\big)^{2k_{j}} d\alpha_j^{\prime}}{\bigg({\bigg(\frac{(1- \varepsilon^2_j)x_j}{2} -\varepsilon_j\alpha_j^{\prime}\bigg)^2 + \frac{(1+\varepsilon^2_j)^2 y^2_j}{4}}\bigg)^{k_j}} \notag\\
& \leq \prod_{j=1}^d  \int_{-\infty} ^{\infty} \frac{\big(\varepsilon_{j}y_j\big)^{2k_{j}} d\alpha_j^{\prime}}{\bigg({\bigg(\frac{(1- \varepsilon^2_j)x_j}{2} -\varepsilon_j\alpha_j^{\prime}\bigg)^2 + \frac{(1+\varepsilon^2_j)^2 y^2_j}{4}}\bigg)^{k_j}}.
  \end{align}
For any $1\leq j\leq d$, put
\begin{align*}
 \mathcal{I }_j :=   \int_{-\infty} ^{\infty} \frac{\big(\varepsilon_{j}{y_j}\big)^{2k_j} d\alpha^{\prime}_j}{\bigg(\bigg(\frac{(1- \varepsilon^2_j)x_j}{2} -\varepsilon_j \alpha^{\prime}_j\bigg)^2 + \frac{(1+\varepsilon^2_j)^2 y^2_j}{4}\bigg)^{k_{j}}}.
\end{align*}
Substituting
\begin{align*}
\Theta_j =\frac{(1- \varepsilon^2_j)x_j}{2} -\varepsilon_j \alpha^{\prime}_j \implies d \Theta_j = - \varepsilon_jd\alpha^{\prime}_j,
\end{align*} 
we find that
\begin{align*}
\mathcal{ I}_j  =  2\int_0 ^{\infty}
 \frac{\big(\varepsilon_{j}\big)^{2k_{j}-1}\big(y_j\big)^{2k_j} d\Theta_j}   {\bigg( \Theta^2_j + \frac{(1+\varepsilon^2_j)^2 y^2_j}{4}\bigg)^{k_j}}.
\end{align*}
Now put 
\begin{align*}
\beta_j = \frac{2\Theta_j}{\big(1+ \varepsilon^2_j\big)y_j} \implies d \Theta_j = \frac{\big(1+ \varepsilon^2_j\big)y_j}{2} d \beta_j.
\end{align*} 
So, we arrive at
\begin{align*}
 \mathcal{I}_j \leq  \frac{\big(\varepsilon_{j}\big)^{2k_{j}-1}y_j}{\big((1+ \varepsilon^2_j)\slash2\big)^{2k_j-1}} \int_0 ^{\infty} \frac{d\beta_j}{\big( \beta^2_j+1 \big)^{k_j}}.
\end{align*}
Now from formula 3.251.2 from \cite{gr}, we have
\begin{align*}
 \int_0 ^{\infty} \frac{d\beta_j}{\left( \beta^2_j+1 \right)^{k_j}} = \frac{\sqrt{\pi}\Gamma(k_j-1/2)}{2\Gamma(k_j)} \leq \frac{\Gamma(k_j-1/2)}{\Gamma(k_j)},
\end{align*}
using which, we can conclude that 
\begin{align}\label{proofauxlemmaeqn2}
 \mathcal{I}_j \leq\frac{\Gamma(k_j- 1/2)}{\Gamma(k_j)}\cdot  \frac{\big(\varepsilon_{j}\big)^{2k_{j}-1}y_j} {\big((1+ \varepsilon^2_j)\slash2 \big)^{2k_j-1}}.
\end{align}
Combining the inequalities \eqref{proofauxlemmaeqn1} and \eqref{proofauxlemmaeqn2} completes the proof of the lemma.
\end{proof}
\begin{thm}\label{proofmainthm2.2}
Let the notation be as above, and let $\Gamma^1$ be of {\it{Type (2)}}. Then, for any $\uk=(k_1,\ldots,k_d)\in (2\mathbb{Z}_{>0}^{d})$, we have the following estimate
 \begin{align}\label{proofmainthm2.2eqn}
\sup_{z\in X^1} \mathcal{B}_{X^1}^{\uk}(z)=O_{X^1}\Bigg(\prod_{j=1}^{d} k_j^{3\slash 2}\Bigg).
 \end{align}
 \end{thm}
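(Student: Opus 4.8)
The plan is to follow the heat kernel strategy from \cite{jk2}, exactly as in the proof of Theorem \ref{proofmainthm2}, but now the contribution of the parabolic (cuspidal) elements must be controlled separately, and this is what produces the extra factor $\prod_j k_j^{1/2}$. Starting from inequality \eqref{hkeqn0}, for $z = (z_1, \dots, z_d) \in X^1$ we have
\begin{align*}
\mathcal{B}_{X^1}^{\uk}(z) \leq \sum_{\gamma = (\gamma_1, \dots, \gamma_d) \in \Gamma^1} \prod_{j=1}^{d} K_{\calh}^{k_j}(t; z_j, \gamma_j z_j).
\end{align*}
First I would split the sum over $\Gamma^1$ into the contribution of the identity, the contribution of the parabolic elements in $\Gamma^1_\infty$ (the stabilizer of the unique cusp $\infty$), and the contribution of the remaining, ``genuinely hyperbolic'' elements in $\Gamma^1 \setminus \Gamma^1_\infty$. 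For the last class, the bound \eqref{hkeqn} together with the geometric counting inequality \eqref{jlineq} gives a contribution of size $O_{X^1}(\prod_j k_j)$, exactly as in Theorem \ref{proofmainthm2}; this uses the injectivity radius $r_{X^1}$ defined in \eqref{injecttype2} relative to the cuspidal subgroups.

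Next I would handle the parabolic contribution, which is the heart of the argument. For $\gamma \in \Gamma^1_\infty$ with $\sigma_j(\gamma)$ acting on $\calh$, one computes the hyperbolic distance $\rho_{\gamma_j, z_j} = \mathrm{d}_\calh(z_j, \gamma_j z_j)$ explicitly; when $\gamma_j$ fixes $\infty$ and has the form $\smallmat{\varepsilon_j}{\alpha_j}{0}{\varepsilon_j^{-1}}$, the quantity $\cosh(\rho_{\gamma_j,z_j})$ is, up to elementary manipulation, of the form
\begin{align*}
\frac{\big((1-\varepsilon_j^2)x_j - \varepsilon_j \alpha_j\big)^2 + (1+\varepsilon_j^2)^2 y_j^2}{4 \varepsilon_j^2 y_j^2}.
\end{align*}
Feeding this into the pointwise heat kernel bound \eqref{hkeqn1} (or its cleaned-up form with the $\cosh^{-2k_j}$ factor), the parabolic contribution at the cusp becomes a sum over $\varepsilon \in \mathcal{O}_F^\times / (\mathcal{O}_F^\times)^2$ (finite, by Dirichlet's unit theorem) and $\alpha \in \mathcal{O}_F$ of a product over $j$ of terms of exactly the shape appearing on the left side of Lemma \ref{auxlemma}. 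Applying Lemma \ref{auxlemma} bounds the $\alpha$-sum by $\prod_j \frac{\Gamma(k_j - 1/2)}{\Gamma(k_j)} \cdot (\text{elementary factor in } \varepsilon_j, y_j)$, and Stirling's approximation gives $\Gamma(k_j - 1/2)/\Gamma(k_j) = O(k_j^{-1/2})$. Combined with the prefactor $k_j^2/(k_j + 1/2) = O(k_j)$ from \eqref{hkeqn1}, each coordinate contributes $O(k_j) \cdot O(k_j^{-1/2}) = O(k_j^{1/2})$; wait — I must be careful to track the true power, since the optimal exponent in \eqref{jkestimate2} is $3/2$ per variable, so the parabolic piece should in fact yield $O(k_j^{3/2})$ per coordinate, which means the $k_j$-dependence entering from \eqref{hkeqn1} before the $\varepsilon,\alpha$ summation must be carried as $k_j^2$ rather than $k_j$. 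The correct bookkeeping: the parabolic term gives $\prod_j k_j^2 \cdot \prod_j k_j^{-1/2} = \prod_j k_j^{3/2}$, and the remaining steps must show the $y_j$- and $\varepsilon_j$-dependent factors are bounded uniformly on $X^1$ after passing to a fundamental domain.

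The main obstacle I anticipate is the uniformity in $z$: the explicit bound from Lemma \ref{auxlemma} carries factors like $\varepsilon_j^{2k_j-1} y_j / ((1+\varepsilon_j^2)/2)^{2k_j-1}$, and one must argue that summing these over the finitely many unit classes $\varepsilon$ and taking the supremum over a fundamental domain for $\Gamma^1_\infty$ acting on $\calh^d$ produces a constant depending only on $X^1$ (and in fact, as the remark after the theorem claims, only on $\mathrm{PSL}_2(\mathcal{O}_F)$). This requires choosing a fundamental domain in which $y_j$ is bounded below away from $0$ except in the cuspidal neighborhood, and in the cuspidal neighborhood itself one exploits the decay $\varepsilon_j^{2k_j}/((1+\varepsilon_j^2)/2)^{2k_j}$, which is $< 1$ unless $\varepsilon_j = \pm 1$, the $\varepsilon = \pm 1$ case being precisely the already-handled translations $\alpha \in \mathcal{O}_F$. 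Tying together the identity term (which trivially contributes $O(\prod k_j^2)$ from the on-diagonal heat kernel but only for bounded $t$, then one lets $t \to \infty$ appropriately, or more carefully one uses the $O(k_j)$ per-coordinate diagonal bound from \cite{jk2}), the hyperbolic term $O(\prod k_j)$, and the parabolic term $O(\prod k_j^{3/2})$, the dominant contribution is $O_{X^1}(\prod_{j=1}^d k_j^{3/2})$, which is \eqref{proofmainthm2.2eqn}. The stability-in-covers and universality assertions of the following remark then follow because $r_{X^1}$ and the relevant unit-group data descend from $\mathrm{PSL}_2(\mathcal{O}_F)$.
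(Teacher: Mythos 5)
Your overall strategy --- split the $\Gamma^1$-sum into the cuspidal stabilizer $\Gamma^1_\infty$ and its complement, treat the complement exactly as in the compact case via \eqref{hkeqn} and \eqref{jlineq}, and control the stabilizer contribution via the explicit computation of $\cosh^2(\rho_{\gamma_j,z_j}/2)$ together with Lemma \ref{auxlemma} --- is the paper's. But your bookkeeping for the parabolic term is wrong at the decisive step, and the ``fix'' you apply is not the right one. The heat kernel prefactor genuinely is $\tfrac{2k_j^2}{\pi(k_j+1/2)} = O(k_j)$, not $O(k_j^2)$; you cannot recover the missing half-power by inflating it. The true source of the extra $k_j^{1/2}$ per coordinate is the factor $y_j$ that Lemma \ref{auxlemma} leaves in the bound: the stabilizer contribution is at most $\prod_j \bigl( 4 y_j \cdot \tfrac{\Gamma(k_j-1/2)}{\Gamma(k_j)} \cdot \tfrac{k_j^2}{k_j+1/2}\bigr)$, and $y_j$ is \emph{not} bounded on a fundamental domain --- it tends to infinity in the cuspidal neighbourhood, which is precisely where the supremum must still be controlled. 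Your assertion that the $y_j$-dependent factors are ``bounded uniformly on $X^1$ after passing to a fundamental domain'' is therefore false, and with it your accounting gives $O(\prod_j k_j^{1/2})$ on a compact part only, with no bound at all near the cusp. The paper closes this gap by invoking the argument on p.~12 of \cite{jk2}: the supremum of the Bergman kernel is attained in the region $y_j = O(k_j)$ (for larger $y_j$ the cusp forms decay), whence $y_j \cdot \Gamma(k_j-1/2)/\Gamma(k_j) = O(\sqrt{k_j})$ and each coordinate contributes $O(k_j)\cdot O(\sqrt{k_j}) = O(k_j^{3/2})$. This is the one genuinely new analytic input beyond the compact case, and it is missing from your argument.

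A secondary error: the sum over units is not finite. The elements of $\Gamma^1_\infty$ are parametrized (up to sign) by pairs $(\varepsilon,\alpha) \in \mathcal{O}_F^\times \times \mathcal{O}_F$, and for $d \geq 2$ Dirichlet's theorem makes $\mathcal{O}_F^\times$ infinite of rank $d-1$; passing to $\mathcal{O}_F^\times/(\mathcal{O}_F^\times)^2$ does not reduce the distinct transformations $z \mapsto \varepsilon^2 z + \varepsilon\alpha$ to finitely many unit classes. The paper bounds this infinite sum by comparing $\sum_{\varepsilon} \prod_j \tfrac{2y_j}{1+\varepsilon_j^2}$ with the integral $\prod_j \int_{-\infty}^\infty \tfrac{2 y_j\, d\varepsilon_j}{1+\varepsilon_j^2} = \prod_j 2\pi y_j$, using that $\mathcal{O}_F^\times$ embeds as a lattice; some such step must appear in your argument as well.
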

\begin{proof}
For any $\uk=(k_1,\ldots,k_d)\in (2\mathbb{Z}_{>0}^{d})$ and $z=(z_1,\ldots,z_d)\in X^1$, from inequality \eqref{hkeqn0} from section \ref{subsechkestimates}, we have
\begin{multline}\label{proofmainthm2.2eqn1}
\mathcal{B}_{X^1}^{\uk}(z)\leq \sum_{\gamma=(\gamma_1,\ldots,\gamma_d)\in\Gamma^1}\,\prod_{j=1}^{d}K_{\calh}^{k_j}(t;z_j,\gamma_jz_j) \\
= \sum_{\gamma=(\gamma_1,\ldots,\gamma_d)\in\Gamma^1\backslash\Gamma^{1}_{\infty}}\,\prod_{j=1}^{d}K_{\calh}^{k_j}(t;z_j,\gamma_jz_j) +\sum_{\gamma=(\gamma_1,\ldots,\gamma_d)\in\Gamma^1_{\infty}}\,\prod_{j=1}^{d}K_{\calh}^{k_j}(t;z_j,\gamma_jz_j) .
\end{multline}
Following similar arguments that went into the proof of Theorem \ref{proofmainthm2}, we have the following estimate for the first term on the right-hand side of the above equality
\begin{align}\label{proofmainthm2.2eqn2}
\sum_{\gamma=(\gamma_1,\ldots,\gamma_d)\in\Gamma^1\backslash\Gamma^1_{\infty}}\,\prod_{j=1}^{d} K_{\calh}^{k_j}(t;z_j,\gamma_jz_j)=\bigg(36+\frac{1}{\sinh^{2}(r_{X^1}\slash 4)}\bigg)^{d} \cdot \prod_{j=1}^{d}k_{j}=O_{X^1}\bigg(\prod_{j=1}^{d}k_{j}\bigg),
\end{align}
where $r_{X^1}$ is as defined in equation \eqref{injecttype2}.

From inequality  (2.5) of \cite{abms}, for any $1\leq j \leq d$, we have the following estimate 
 \begin{align*}
K_{\calh}^{k_j}(t;z_j,\gamma_jz_j) \leq \frac{2k_j^2}{\pi(k_j+1\slash 2)}\cdot \frac{e^{-{\rho_{\gamma_j, z_j}}}}{\cosh^{2k_j}({\rho_{\gamma_j, z_j}}/{2})},
 \end{align*}
 using which, we get
\begin{multline}\label{proofmainthm2.2eqn3}
 \sum_{\gamma=(\gamma_1,\ldots,\gamma_d) \in \Gamma^1_{\infty} }\, \prod_{j=1}^{d}K_{\calh}^{k_j}(t;z_j,\gamma_jz_j) \leq\sum_{\gamma =(\gamma_1,\ldots,\gamma_d)\in \Gamma^1_{\infty}}\,
 \prod_{j=1}^d\frac{2k_{j}^2}{\pi(k_j+1\slash2)} \cdot \frac{e^{-{\rho_{\gamma_j, z_j}}}}{\cosh^{2k_j}(\rho_{\gamma_j, z_j}\slash 2)} \\ 
 = \bigg(\prod_{j=1}^{d}\frac{2k_{j}^2}{\pi(k_j+1\slash2)}\bigg)\cdot\sum_{\gamma =(\gamma_1,\ldots,\gamma_d)\in \Gamma^1_{\infty}} \,\prod_{j=1}^d \frac{e^{-{\rho_{\gamma_j, z_j}}}}{\cosh^{2k_j}(\rho_{\gamma_j, z_j}\slash 2)}.
 \end{multline}
 Recall that any $\gamma\in\Gamma^1_{\infty}$ is of the form
 \begin{align*}
 \gamma=\bigg(\begin{matrix}
 \varepsilon & \alpha\\
 0 & \varepsilon^{-1}
\end{matrix} \bigg)\in\Gamma^1_{\infty},\,\,\alpha\in\mathcal{O}_{F}, \,\,\mathrm{and} \,\,\varepsilon\in\mathcal{O}_{F}^{\times},
 \end{align*}
 which implies that for $z\in X^1$ and $1\leq j\leq d$, we have
 \begin{align*}
 \gamma_jz_j=\varepsilon_j^{2}z_j +\varepsilon_j \alpha_j.
 \end{align*}
For any $z,w\in\calh$, using the formula 
\begin{align*}
 \cosh^2 \big( \mathrm{d}_{\calh}(z, w) \slash 2\big) = \frac{{\big| z- w \big|}^2}{4\mathrm{Im} (z)\mathrm{Im} (w)} +1,
\end{align*}
for any $1\leq j\leq d$ and $z=(z_1,\ldots,z_d)\in X^1$, we compute,
\begin{align*}
 \cosh^{2}(\rho_{\gamma_j, z_j}\slash2) =&\cosh^{2}\big(d_{\calh}(z_j,\gamma_j z_j)\slash 2\big)= \frac{\big|z_j- \gamma_j z_j \big|^2}{4 \mathrm{Im}(z_j) \mathrm{Im} (\gamma_j z_j)} +1   \\  =& \frac{\big((1- \varepsilon^2_j)x_j-\varepsilon_j \alpha_j\big)^2 + (1+\varepsilon^2_j)^2 y^2_j}{4\varepsilon^2_j y^2_j}.
\end{align*}
Using the above equation, and the fact that for all $1\leq j\leq d$,  we have $e^{-{\rho_{\gamma_j, z_j}}} \leq 1$. So using Lemma \ref{auxlemma}, we arrive at the following inequality
\begin{align}\label{proofmainthm2.2eqn4}
 \sum_{\gamma =(\gamma_1,\ldots,\gamma_d)\in \Gamma^1_{\infty}} \,\prod_{j=1}^d \frac{e^{-{\rho_{\gamma_j, z_j}}}}{\cosh^{2k_j}({\rho_{\gamma_j, z_j}}/{2})} & \leq \sum_{\varepsilon\in \mathcal{O}^{\times}_{F}} \sum_{\alpha \in \mathcal{O}_{F}}\prod_{j=1}^d \frac{\big(4\varepsilon^2_j y^2_j\big)^{k_{j}}}{\bigg(\big((1- \varepsilon^2_j)^2-\varepsilon_j \alpha_j\big)^2 + \big(1+\varepsilon^2_j\big)^2 y^2_j\bigg)^{k_{j}}}\notag
 \\ & \leq \bigg( \prod_{j=1}^d  \frac{\Gamma(k_j- 1\slash2)}{\Gamma(k_j)} \bigg)\cdot\sum_{\varepsilon \in \mathcal{O}^{\times}_{K}}\prod_{j=1}^d  \frac{\big(\varepsilon_j\big)^{2k_{j}-1}y_j}{\big((1+ \varepsilon^2_j)\slash2 \big)^{2k_j-1}}.
\end{align}
Now  for any $1\leq j\leq d$, using the simple fact
\begin{align*}
 \frac{2}{1+\varepsilon_j^2} \leq \frac{1}{\varepsilon_j}\,\,\mathrm{and}\,\, \prod_{j=1}^d \varepsilon_j=\pm1,
 \end{align*}
and the fact that the group of units $\mathcal{O}_{F}^{\times}$ is a sub-lattice of $\mathbb{Z}^{d}$, we derive
\begin{align}\label{proofmainthm2.2eqn5}
  \sum_{\varepsilon \in \mathcal{O}^{\times}_{F}}\prod_{j=1}^d\Bigg| \frac{\big(\varepsilon_j\big)^{2k_{j}-1}y_j}{\big((1+ \varepsilon^2_j)\slash2 \big)^{2k_j-1}} \Bigg|& \leq 
  \sum_{\varepsilon \in \mathcal{O}^{\times}_{F}}\prod_{j=1}^{d} \Bigg|\frac{\big(\varepsilon_j\big)^{2k_{j}-1}} {(\varepsilon_j)^{2k_j-2}}\Bigg|\cdot\frac{y_j}{\big((1+ \varepsilon^2_j)\slash2 \big)}
  =\sum_{\varepsilon \in \mathcal{O}^{\times}_{F}}  \prod_{j=1}^d \frac{2y_j } {\big(1+ \varepsilon^2_j \big)} \notag\\
 & \leq \prod_{j=1}^d   \sum_{\varepsilon \in \mathcal{O}^{\times}_{F}}\frac{2y_j } {\big(1+ \varepsilon^2_j \big)}\leq
 \prod_{j=1}^d \int_{-{\infty}}^{\infty} \frac{2y_j\mathrm{d} \varepsilon_j}{\big(1+ \varepsilon^2_j \big)}= \prod_{j=1}^d 2\pi y_j. 
\end{align}
Combining inequalities \eqref{proofmainthm2.2eqn3}, \eqref{proofmainthm2.2eqn4}, and \eqref{proofmainthm2.2eqn5}, we arrive at the following inequality
\begin{align*}
 \sum_{\gamma=(\gamma_1,\ldots,\gamma_d) \in \Gamma^1_{\infty} } \,\prod_{j=1}^{d} K_{\calh}^{k_j}(t;z_j,\gamma_jz_j) \leq\prod_{j=1}^{d} \Bigg(4 y_j\cdot \frac{\Gamma(k_j- 1/2)}{\Gamma(k_j)} \cdot  \frac{k_{j}^2}{(k_j+1\slash2)}\Bigg).
\end{align*}
Now adapting the same arguments as in p. 12 in section $5$ of \cite{jk2}, for any $1\leq j\leq d$, we get
\begin{align*}
y_j\cdot \frac{\Gamma(k_j- 1/2)}{\Gamma(k_j)} =O(\sqrt{k_j}), 
\end{align*}
which implies that
\begin{align}\label{proofmainthm2.2eqn6}
\sum_{\gamma=(\gamma_1,\ldots,\gamma_d) \in \Gamma^1_{\infty} }\, \prod_{j=1}^{d}K_{\calh}^{k_j}(t;z_j,\gamma_jz_j) \leq\prod_{j=1}^{d} \Bigg(4  y_j\cdot \frac{\Gamma(k_j- 1/2)}{\Gamma(k_j)} \cdot
 \frac{k_{j}^2}{(k_j+1\slash2)}\Bigg)=O\big(\prod_{j=1}^{d}k_{j}^{3\slash 2}\big).
 \end{align}
 The proof of the theorem follows by combining the estimates \eqref{proofmainthm2.2eqn2} and \eqref{proofmainthm2.2eqn6}.
 \end{proof}
\begin{rem}
The estimate that we derived for $\mathcal{B}_{X^1}^{\uk}(z)$ in Theorem \ref{proofmainthm2.2} depends only the injectivity radius $r_{X^1}$, which is bounded from below by the injectivity radius of $X_{0}=\mathrm{PSL}_{2}(\mathcal{O}_{F})\backslash\calh^d$. This implies that 
\begin{align*}
\sup_{z\in X^1} \mathcal{B}_{X^1}^{\uk}(z)=O\Bigg(\prod_{j=1}^{d}k_j^{3\slash 2}\Bigg),
\end{align*}
i.e., the implied constant in the estimate on the right hand side of equation  \eqref{proofmainthm2.2eqn} is a universal constant depending only on the number field $F$. 
\end{rem}

{\bf{Acknowledgements}}.
Both the authors would like to thank the Referee for his comments, which have greatly helped in improving the quality  of the exposition. Both the authors would also like to thank the  Mathematics section of ICTP, Trieste for their support and hospitality, and for providing a congenial atmosphere where part of this article was realized.  

 The first author acknowledges the support of INSPIRE research grant DST/INSPIRE/04/2015/\\002263. The second author was partially supported by SERB grant EMR/2016/000840.

{}
\end{document}